\newtheorem{theorem}{Theorem}
\newtheorem{corollary}[theorem]{Corollary}
\newtheorem{definition}[theorem]{Definition}
\newtheorem{lemma}[theorem]{Lemma}
\newtheorem{proposition}[theorem]{Proposition}
\newtheorem{remark}[theorem]{Remark}
\begin{document}
\title[Calder\'on-Hardy Spaces with variable exponents ]{Calder\'on-Hardy Spaces with variable exponents and the Solution of the Equation $\Delta^{m} F = f$ for $f \in H^{p(\cdot)}(\mathbb{R}^{n})$}
\author{Pablo Rocha}
\address{Universidad Nacional del Sur, INMABB (Conicet), Bah\'{\i}a Blanca, 8000 Buenos Aires, Argentina}
\email{pablo.rocha@uns.edu.ar}
\thanks{\textbf{Key
words and phrases}: Variable Calder\'on-Hardy Spaces, Variable Hardy Spaces, Atomic decomposition}
\thanks{\textbf{2.010
Math. Subject Classification}: 42B25, 42B30}
\thanks{The author is partially supported by SECYTUNC and UNS}
\maketitle
\begin{abstract}
In this article we define the Calder\'on-Hardy spaces with variable exponents on $\mathbb{R}^{n}$, $\mathcal{H}^{p(.)}_{q, \gamma}(\mathbb{R}^{n})$, and we show that for $m \in \mathbb{N}$ the operator $\Delta^{m}$ is a bijective mapping from $\mathcal{H}^{p(.)}_{q, 2m}(\mathbb{R}^{n})$ onto $H^{p(.)}(\mathbb{R}^{n})$.
\end{abstract}

\section{Introduction}

Given a measurable function $p(\cdot) : \mathbb{R}^{n} \rightarrow (0, \infty)$ such that $$0 < \inf_{x \in \mathbb{R}^{n}} p(x) \leq \sup_{x \in \mathbb{R}^{n}} p(x) < \infty,$$
let $L^{p(\cdot)}(\mathbb{R}^{n})$ denote the space of all measurable functions $f$ such that for some $\lambda > 0$,
$$ \int_{\mathbb{R}^{n}} \, \left| \frac{f(x)}{\lambda} \right|^{p(x)} \, dx < \infty.$$
We set $$\| f \|_{p(\cdot)} = \inf \left\{ \lambda > 0 : \int_{\mathbb{R}^{n}} \, \left| \frac{f(x)}{\lambda} \right|^{p(x)} \, dx \leq 1 \right\}.$$
We see that $\left( L^{p(\cdot)}(\mathbb{R}^{n}), \| . \|_{p(\cdot)} \right)$ is a quasi normed space.
These spaces are referred to as the Lebesgue spaces with variable exponents. In the last years many authors have extended the machinery of classical harmonic analysis to these spaces. See, for example \cite{capone}, \cite{uribe}, \cite{diening}, \cite{ruzicka}, \cite{kova}.

In the celebrate paper \cite{fefferman}, C. Fefferman and E. M. Stein defined the Hardy Spaces $H^{p}(\mathbb{R}^{n})$, $0 < p < \infty$, with the norm given by
$$\| f \|_{H^{p}} = \left\| \sup_{t>0} \sup_{\phi \in \mathcal{F}_{N}} | t^{-n} \phi(t^{-1}.) \ast f| \right\|_{p},$$
for suitable family $\mathcal{F}_{N}$. In the paper \cite{nakai}, E. Nakai and Y. Sawano defined the Hardy spaces with variable exponents $H^{p(.)}(\mathbb{R}^{n})$, replacing $L^{p}$ by $L^{p(.)}$ in the above norm and they investigate their several properties.

Let $L^{q}_{loc}(\mathbb{R}^{n})$, $1 < q < \infty$, be the space of all measurable functions $f$ on $\mathbb{R}^{n}$ that belong locally to $L^{q}$ for compact sets of $\mathbb{R}^{n}$. We endowed $L^{q}_{loc}(\mathbb{R}^{n})$ with the topology generated by the seminorms
$$|f|_{q, \, Q} = \left( |Q|^{-1} \int_{Q} \, |f(x)|^{q}\, dx \right)^{1/q},$$ where $Q$ is a cube in $\mathbb{R}^{n}$ and $|Q|$ denotes its Lebesgue measure.

For $f \in L^{q}_{loc}(\mathbb{R}^{n})$, we define a maximal function $\eta_{q, \, \gamma}(f; x)$ as
$$\eta_{q, \, \gamma}(f; x) = \sup_{r > 0} r^{-\gamma} |f|_{q, \, Q(x, r)},$$
where $\gamma$ is a positive real number and $Q(x, r)$ is the cube centered at $x$ with side length $r$.

Let $k$ a non negative integer and $\mathcal{P}_{k}$ the subspace of $L^{q}_{loc}(\mathbb{R}^{n})$ formed by all the polynomials of degree at most $k$. We denote by $E^{q}_{k}$ the quotient space of $L^{q}_{loc}(\mathbb{R}^{n})$ by $\mathcal{P}_{k}$. If $F \in E^{q}_{k}$, we define the seminorm
$\| F \|_{q, \, Q} = \inf \left\{ |f|_{q, \, Q} : f \in F \right\}$. The family of all these seminorms induces on $E^{q}_{k}$ the quotient topology.

Given a positive real number $\gamma$, we can write $\gamma = k + t$, where $k$ is a non negative integer and $0 < t \leq 1$. This decomposition is unique.

For $F \in E^{q}_{k}$, we define a maximal function $N_{q, \, \gamma}(F; x)$ as $$N_{q, \, \gamma}(F; x) = \inf \left\{ \eta_{q, \, \gamma}(f; x) : f \in F \right\}.$$ This type of maximal function was introduced by A. P. Calder\'on in \cite{calderon}.

Let $p(.) : \mathbb{R}^{n} \rightarrow (0, \infty)$ be a measurable function such that $0< p_{-} \leq p_{+} < \infty$, we say that an element $F \in E^{q}_{k}$ belongs to the Calder\'on-Hardy space with variable exponents $\mathcal{H}^{p(.)}_{q, \, \gamma}(\mathbb{R}^{n})$ if the maximal function $N_{q, \, \gamma}(F; \cdot) \in L^{p(.)}(\mathbb{R}^{n})$. The "norm" of $F$ in $\mathcal{H}^{p(.)}_{q, \, \gamma}(\mathbb{R}^{n})$ is defined as $\| F \|_{\mathcal{H}^{p(.)}_{q, \, s}(\mathbb{R}^{n})} = \| N_{q, \, \gamma}(F; \cdot) \|_{p(.)}$.

The Calder\'on-Hardy spaces were defined in the setting of the classical Lebesgue spaces by A. B. Gatto, J. G. Jim\'enez and C. Segovia in \cite{segovia}, they characterize the solutions of $\Delta^{m} F = f$, $m \in \mathbb{N}$, for $f \in H^{p}(\mathbb{R}^{n})$. Moreover, they proved that the operator $\Delta^{m}$ is a bijective mapping from the Calder\'on-Hardy spaces onto $H^{p}(\mathbb{R}^{n})$. In this work we show that this result holds in the context of the Lebesgue spaces with variable exponents. In \cite{sheldy}, S. Ombrosi studied the weighted version of the Calder\'on-Hardy spaces. The Calder\'on-Hardy spaces were mentioned for first time with this name in \cite{sheldy2}. A. Perini studied in \cite{perini} the boundedness of one-sided fractional integrals on these spaces. In \cite{sheldy1}, S. Ombrosi, A. Perini and R. Testoni obtain a complex interpolation theorem between weighted Calder\'on-Hardy spaces for weights in a Sawyer class.

Given a function $p(\cdot):\mathbb{R}^{n}\rightarrow(0,\infty)$ we say that $p(\cdot)$ is
locally log-H\"{o}lder continuous, and denote this by $p(\cdot)\in
LH_{0}(\mathbb{R}^{n})$, if there exists a positive constant $C_{0}$ such that
\[
\left\vert p(x)-p(y)\right\vert \leq\frac{C_{0}}{-\log\left\vert
x-y\right\vert }, \,\,\, \left\vert
x-y\right\vert <\frac{1}{2}.
\]
We say that $p(\cdot)$ is log-H\"{o}lder continuous at infinity, and denote
this by $p(\cdot)\in LH_{\infty}(\mathbb{R}^{n})$ if there exists a positive constant
$C_{\infty}$ such that
\[
\left\vert p(x)-p(y)\right\vert \leq\frac{C_{\infty}}{\log\left(  e+\left\vert
x\right\vert \right)  }, \,\,\, \left\vert
y\right\vert \geq\left\vert x\right\vert .
\]
We note that the condition log-H\"{o}lder continuous at infinity is equivalent to the existence of constants
$C_{\infty}$ and $p_{\infty}$ such that
\[
\left\vert p(x)-p_{\infty}\right\vert \leq\frac{C_{\infty}}{\log\left(  e+\left\vert
x\right\vert \right)  }, \,\,\, x \in \mathbb{R}^{n}.
\]
As usual we will denote $p_{+} = \sup_{x \in \mathbb{R}^{n}} p(x)$, $p_{-}= \inf_{x \in \mathbb{R}^{n}} p(x)$, $\underline{p} = \min \{ p_{-}, 1\}$ and $\Delta$ stands for the Laplacian.

Let $F \in E^{q}_{2m-1}$ and $f \in F$. Since $f$ belongs to $L^{q}_{loc}(\mathbb{R}^{n})$, $\Delta^{m}f$ is defined in sense of distributions. On the other hand, since any two representatives of $F$ differ in a polynomial of degree smaller than $2m$, we get that $\Delta^{m}f$ is independent of the representative $f \in F$ chosen. Therefore, for $F \in E^{q}_{2m-1}$, we shall define $\Delta^{m}F$ as the distribution $\Delta^{m}f$, where $f$ is any representative of $F$.

\qquad

Our main result is contained in the following

\begin{theorem} Let $p(.)$ be a function that belongs to $LH_{0}(\mathbb{R}^{n})\cap LH_{\infty}(\mathbb{R}^{n})$, $1 < q < \infty$ and $m \in \mathbb{N}$ such that $0 < p_{-} \leq p_{+} < \infty$ and $n (2m + n/q)^{-1} < \underline{p}$. Then for $q$ sufficiently large the operator $\Delta^{m}$ is a bijective mapping from $\mathcal{H}^{p(.)}_{q, 2m}(\mathbb{R}^{n})$ onto $H^{p(.)}(\mathbb{R}^{n})$. Moreover, there exist two positive constant $c_1$ and $c_2$ such that
\[
c_1 \|F \|_{\mathcal{H}^{p(.)}_{q, 2m}} \leq \| \Delta^{m}F \|_{H^{p(.)}} \leq c_2 \|F \|_{\mathcal{H}^{p(.)}_{q, 2m}}
\]
hold for all $F \in \mathcal{H}^{p(.)}_{q, 2m}(\mathbb{R}^{n})$.
\end{theorem}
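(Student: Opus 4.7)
The plan is to transfer the Gatto–Jim\'enez–Segovia strategy from \cite{segovia} to the variable exponent setting, using the Nakai–Sawano atomic decomposition of $H^{p(\cdot)}(\mathbb{R}^{n})$ as the main bridge. The argument splits naturally into three parts: surjectivity with the estimate $\|F\|_{\mathcal{H}^{p(\cdot)}_{q,2m}}\le c_{1}^{-1}\|\Delta^{m}F\|_{H^{p(\cdot)}}$, the reverse estimate for $F\in\mathcal{H}^{p(\cdot)}_{q,2m}$, and injectivity of $\Delta^{m}$.

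For the surjectivity part, I would start from the atomic decomposition $f=\sum_{j}\lambda_{j}a_{j}$ in $H^{p(\cdot)}(\mathbb{R}^{n})$, where each $a_{j}$ is a $(p(\cdot),q,d)$-atom supported in a cube $Q_{j}$, with $d$ chosen large enough in terms of $n$, $p_{-}$ and $m$. For each atom $a_{j}$, I would solve $\Delta^{m}A_{j}=a_{j}$ explicitly via convolution with the fundamental solution of $\Delta^{m}$, namely a Riesz-type kernel $K_{m}(x)=c_{n,m}|x|^{2m-n}$ (with logarithmic modification when $2m\ge n$). Using the vanishing moments of $a_{j}$ up to order $d$, standard computations yield that, after subtracting a suitable Taylor polynomial, the representative $A_{j}$ of a class $F_{j}\in E^{q}_{2m-1}$ satisfies a pointwise decay estimate of Calder\'on-molecule type, so that $N_{q,2m}(F_{j};x)$ is controlled by a multiple of $|Q_{j}|^{-1/p(\cdot)}\chi_{Q_{j}^{*}}(x)$ plus a tail that decays faster than any prescribed power of $|x-x_{Q_{j}}|/\ell(Q_{j})$. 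The exponent $q$ must be taken large enough so that $n(2m+n/q)^{-1}<\underline{p}$ forces the decay rate of the tail to exceed the threshold needed for summability.

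The delicate point, and the main obstacle, is to sum the resulting series in $\mathcal{H}^{p(\cdot)}_{q,2m}$: the norm is non-subadditive, so one cannot simply raise to the $p$-th power. Here I would mimic the Nakai–Sawano technique for variable $H^{p(\cdot)}$, combining their key inequality for sums of the form $\sum_{j}(|\lambda_{j}|/\|\chi_{Q_{j}}\|_{p(\cdot)})\chi_{Q_{j}}$ with the pointwise majorization of $N_{q,2m}(F;\cdot)$ obtained above; the log-H\"older hypotheses $LH_{0}\cap LH_{\infty}$ enter precisely through this step, via the boundedness of the maximal operator and Muckenhoupt-type weighted estimates on the variable Lebesgue quasi-norm. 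This yields $F=\sum_{j}\lambda_{j}F_{j}$ in $\mathcal{H}^{p(\cdot)}_{q,2m}$ with $\Delta^{m}F=f$ and the quantitative bound $\|F\|_{\mathcal{H}^{p(\cdot)}_{q,2m}}\le c_{1}^{-1}\|f\|_{H^{p(\cdot)}}$.

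For the reverse inequality, I would show a pointwise domination of a grand maximal function of $\Delta^{m}f$ by $N_{q,2m}(F;x)$. Given $\phi\in\mathcal{F}_{N}$ with $N$ large, a change of variables and $2m$ integrations by parts move the $2m$ derivatives from $\phi$ onto the Taylor remainder of a representative $f\in F$; the resulting convolution can be estimated by $r^{-2m}|f-P|_{q,Q(x,r)}$ for an appropriate polynomial $P\in\mathcal{P}_{2m-1}$, which is exactly controlled by $N_{q,2m}(F;x)$. Taking the $L^{p(\cdot)}$-quasinorm gives $\|\Delta^{m}F\|_{H^{p(\cdot)}}\le c_{2}\|F\|_{\mathcal{H}^{p(\cdot)}_{q,2m}}$. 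Finally, injectivity follows from a Liouville-type argument: if $\Delta^{m}F=0$ then any representative $f$ is polyharmonic of order $m$, hence smooth, and the membership of $N_{q,2m}(F;\cdot)$ in $L^{p(\cdot)}(\mathbb{R}^{n})$ combined with the definition of $N_{q,2m}$ forces $f$ to be a polynomial of degree at most $2m-1$, i.e.\ $F=0$ in $E^{q}_{2m-1}$. Combining these three steps gives the bijectivity of $\Delta^{m}$ together with the announced two-sided estimate.
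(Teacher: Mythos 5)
Your architecture matches the paper's: Nakai--Sawano atomic decomposition, convolution with the fundamental-solution kernel $h$, a pointwise estimate for the Calder\'on maximal function $N_{q,2m}$ of the resulting potential classes, and the reverse inequality via the Gatto--Jim\'enez--Segovia grand-maximal-function bound. However, there are concrete gaps in the surjectivity step as you describe it.

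The assertion that $N_{q,2m}(F_j;x)$ is controlled by a constant multiple of $\|\chi_{Q_j}\|_{p(\cdot)}^{-1}\chi_{Q_j^{*}}(x)$ near the cube is not what the computation gives. The local contribution (the paper's Proposition 15) is
\[
\chi_{4\sqrt{n}Q_j}(x)\Bigl(M(a_j)(x) + [M(M^{q}(a_j))(x)]^{1/q} + \sum_{|\alpha|=2m} T^{*}_{\alpha}(a_j)(x)\Bigr),
\]
and these maximal and singular-integral terms are in general unbounded on $4\sqrt{n}Q_j$ when $a_j$ is merely an $L^{p_0}$-normalized atom. Consequently one cannot sum $\sum_j \lambda_j N_{q,2m}(F_j;\cdot)$ in $L^{p(\cdot)}$ by a pointwise constant bound; the paper instead uses the $L^{p_0}$-boundedness of $M$ and $T^{*}_{\alpha}$ together with the atomic normalization $\|a_j\|_{p_0}\le|Q_j|^{1/p_0}/\|\chi_{Q_j}\|_{p(\cdot)}$ and Nakai--Sawano's Lemma 4.11 to convert an $L^{p_0}$-normalized sequence supported on cubes into an estimate by $\mathcal{A}(\{k_j\},\{Q_j\},p(\cdot))$. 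That lemma is precisely why $q$ (equivalently $p_0$) must be taken large; you attribute the largeness of $q$ to the tail decay, but the tail is $|x-x_{Q_j}|^{-(2m+n/q-\mu)}$ for $0<\mu<2m$ -- a fixed power, not ``faster than any prescribed power'' -- and its summability is governed solely by the standing hypothesis $n(2m+n/q)^{-1}<\underline{p}$, which lets one pick $\mu$ with $(2m+n/q-\mu)\underline{p}>n$.

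For injectivity, your Liouville sketch names the desired conclusion but omits the mechanism. The paper proves it by showing, via lower semicontinuity of $N_{q,2m}(F;\cdot)$, that the level set $\{N_{q,2m}(F;\cdot)>1\}$ is open and of finite measure and that $N_{q,2m}(F;\cdot)$ is in $L^{r}$ off it for a suitable $r\ge\max\{q,p_{+}\}$, placing $F$ in Calder\'on's space $\mathcal{N}^{r,q}_{2m}$; injectivity then follows from Lemma 9 of \cite{calderon}. The implication ``polyharmonic representative with $N_{q,2m}(F;\cdot)\in L^{p(\cdot)}$ must be a polynomial of degree $\le 2m-1$'' is exactly what that lemma supplies, and it does not follow from smoothness of the representative alone.
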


The case $p_{+} \leq n (2m + n/q)^{-1}$ is trivial.

\begin{theorem} If $p(.)$ is a positive measurable function such that $p_{+} \leq n (2m + n/q)^{-1}$, then $\mathcal{H}^{p(.)}_{q, \, 2m}(\mathbb{R}^{n}) = \{ 0 \}.$
\end{theorem}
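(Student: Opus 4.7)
The plan is to argue by contradiction: I would assume $F \in \mathcal{H}^{p(\cdot)}_{q, 2m}(\mathbb{R}^{n})$ is non-zero in $E^{q}_{2m-1}$ and show that $N_{q, 2m}(F; \cdot)$ decays too slowly at infinity to lie in $L^{p(\cdot)}(\mathbb{R}^{n})$. The argument reduces to three pieces: first, locate a cube on which $F$ has a strictly positive seminorm; second, propagate that positivity to a global pointwise lower bound of the form $N_{q, 2m}(F; y) \geq c(1 + |y|)^{-(2m + n/q)}$; third, show that such a lower bound is incompatible with the hypothesis $p_{+} \leq n(2m + n/q)^{-1}$ through a direct modular estimate.

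The positivity step is the one I expect to be most delicate. If $\|F\|_{q, Q} = \inf_{f \in F}|f|_{q, Q}$ were zero for every cube $Q$, then, fixing any representative $f_{0} \in F$, the $L^{q}(Q)$-distance from $f_{0}$ to the finite-dimensional subspace $\mathcal{P}_{2m-1}$ would vanish on every $Q$. Since $\mathcal{P}_{2m-1}$ is closed in $L^{q}(Q)$, $f_{0}$ would coincide a.e.\ on $Q$ with some $P_{Q} \in \mathcal{P}_{2m-1}$; the uniqueness of polynomial agreement on overlapping cubes would force all $P_{Q}$ to be a single polynomial $P$, giving $f_{0} = P$ a.e., and hence $F = 0$, a contradiction. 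Therefore some cube $Q_{0} = Q(x_{0}, r_{0})$ satisfies $\|F\|_{q, Q_{0}} > 0$.

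With this $Q_{0}$ in hand, I set $r_{y} = r_{0} + 2|y - x_{0}|$, so that $Q_{0} \subset Q(y, r_{y})$ for every $y \in \mathbb{R}^{n}$. For any representative $f \in F$,
\[
r_{y}^{n/q}\,|f|_{q, Q(y, r_{y})} = \|f\|_{L^{q}(Q(y, r_{y}))} \geq \|f\|_{L^{q}(Q_{0})} = r_{0}^{n/q}\,|f|_{q, Q_{0}},
\]
so $\eta_{q, 2m}(f; y) \geq r_{y}^{-2m}|f|_{q, Q(y, r_{y})} \geq r_{y}^{-(2m + n/q)} r_{0}^{n/q}\,|f|_{q, Q_{0}}$. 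Taking the infimum over $f \in F$ and using $r_{y} \leq C(1 + |y|)$ yields the pointwise bound $N_{q, 2m}(F; y) \geq c_{0}(1 + |y|)^{-(2m + n/q)}$ for some $c_{0} > 0$. For any $\lambda > 0$,
\[
\int_{\mathbb{R}^{n}} \left( \frac{N_{q, 2m}(F; y)}{\lambda} \right)^{p(y)} dy \geq \min\{1,(c_{0}/\lambda)^{p_{+}}\} \int_{\mathbb{R}^{n}} (1 + |y|)^{-(2m + n/q) p(y)}\, dy.
\]
The hypothesis $p_{+} \leq n(2m + n/q)^{-1}$ forces $(2m + n/q) p(y) \leq n$ pointwise, so $(1 + |y|)^{-(2m + n/q) p(y)} \geq (1 + |y|)^{-n}$ and the last integral diverges. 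The Luxemburg modular is therefore infinite for every $\lambda > 0$, contradicting $N_{q, 2m}(F; \cdot) \in L^{p(\cdot)}(\mathbb{R}^{n})$, and we conclude $F = 0$.
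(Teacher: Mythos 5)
Your proof is correct and follows essentially the same strategy as the paper's: locate a cube on which the class $F$ has strictly positive $L^q$-seminorm, use large cubes containing it to obtain a lower bound $N_{q,2m}(F;y)\gtrsim |y|^{-(2m+n/q)}$ at infinity, and show the resulting modular diverges when $(2m+n/q)p_+\leq n$. The only real difference is that you spell out the existence of the initial cube (via closedness of the finite-dimensional subspace $\mathcal{P}_{2m-1}$ in $L^q(Q)$ and agreement of the limiting polynomials on overlaps), which the paper dispatches with ``it is easy to check.''
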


In Section 2 we state some auxiliary lemmas and propositions to get the main results. We also recall the definition and atomic decomposition of the Hardy
spaces with variable exponents given in \cite{nakai}. In Section 3 we give the proofs of Theorem 1 and Theorem 2.

\qquad

\textbf{Notation:} The symbol $A\lesssim B$ stands for the inequality $A\leq
cB$ for some constant $c.$ We denote by $Q\left( x_0,r\right) $ the cube centered at $x_0 \in \mathbb{R}^{n}$ with side lenght $r.$ Given a cube $Q=Q\left( x_0,r\right),$ we set $\delta Q=Q(x_0, \delta r)$ and $l\left( Q\right) =r.$ For a measurable subset $E\subseteq \mathbb{R}^{n}$ we denote by $\left\vert E\right\vert $ and $\chi_{E}$ the Lebesgue measure of $E$ and the characteristic function of $E$ respectively. For a function $p(.):\mathbb{R}^{n}\rightarrow \left( 0,\infty
\right) $ we define $p_{+} = \sup_{x \in \mathbb{R}^{n}} p(x)$, $p_{-}= \inf_{x \in \mathbb{R}^{n}} p(x)$ and $\underline{p}=\min \left\{ p_{-},1\right\}$. As usual we denote with $S(\mathbb{R}^{n})$ the space of smooth and rapidly decreasing functions, with $S^{\prime }(\mathbb{R}^{n})$ the dual space and $\Delta$ stands for the Laplacian. If $\mathbf{\alpha }$ is the multiindex $\alpha =(\alpha_{1},...,\alpha_{n})$
then $\left\vert \alpha \right\vert =\alpha _{1}+...+\alpha_{n}.$

Throughout this paper, $c$ will denote a positive constant, not necessarily the
same at each occurrence.

\section{Preliminaries}

The function $p(.): \mathbb{R}^{n} \rightarrow (0, \infty)$ is called the variable exponent. Here we adopt the standard notation in variable exponents. We write
$$p_{-}= \inf_{x \in \mathbb{R}^{n}} p(x), \,\,\,\,\,\, p_{+} = \sup_{x \in \mathbb{R}^{n}} p(x), \,\,\,\, \text{and} \,\,\,\, \underline{p}=\min \left\{ p_{-},1\right\}.$$
Recall that we assumed $0 < p_{-} \leq p_{+} < \infty$.

For measurable function $f$, let
$$\| f \|_{p(\cdot)} = \inf \left\{ \lambda > 0 : \int_{\mathbb{R}^{n}} \, \left| \frac{f(x)}{\lambda} \right|^{p(x)} \, dx \leq 1 \right\},$$
it not so hard to see the following

$1.$ $\| f \|_{p(\cdot)} \geq 0$, and $\| f \|_{p(\cdot)}=0$ if and only if $f \equiv 0$.

$2.$ $\| c \, f \|_{p(\cdot)} = |c| \, \| f \|_{p(\cdot)}$ for $c \in \mathbb{C}$.

$3.$ $\| f + g \|_{p(\cdot)}^{\underline{p}} \leq \| f \|_{p(\cdot)}^{\underline{p}} + \| g \|_{p(\cdot)}^{\underline{p}}$. \\
$ \,\, $ \\
A direct consequence of $\underline{p}$-triangle inequality is the quasi-triangle inequality
$$\| f + g \|_{p(\cdot)} \leq 2^{1/\underline{p}}\left(\| f \|_{p(\cdot)} + \| g \|_{p(\cdot)}\right),$$
for all $f, \, g \in L^{p(\cdot)}(\mathbb{R}^{n})$.

\qquad

The following lemmas are crucial to get the principal result.

\begin{lemma} $($Lemma 6 in \cite{calderon}$)$ The maximal function $N_{q; \, \gamma}(F; x)$ associated with a class $F$ in $E_{k}^{q}$ is lower semicontinuous.
\end{lemma}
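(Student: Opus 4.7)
Lower semicontinuity is equivalent to: whenever $x_n \to x_0$, one has $\liminf_n N_{q,\gamma}(F;x_n) \geq N_{q,\gamma}(F;x_0)$. The idea is to take near-optimal representatives at the moving base-points $x_n$, pass to a weak $L^q_{loc}$ limit, and show that this limit is a representative of $F$ whose maximal function at $x_0$ is controlled by the liminf. So I set $\lambda := \liminf_n N_{q,\gamma}(F;x_n)$, assume $\lambda < \infty$ (otherwise there is nothing to prove), pass to a subsequence realizing the liminf, and for each $n$ pick $f_n \in F$ with $\eta_{q,\gamma}(f_n;x_n) \leq N_{q,\gamma}(F;x_n) + 1/n$.

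From the definition of $\eta_{q,\gamma}$ this yields $\int_{Q(x_n,r)} |f_n|^q \leq (\lambda+2)^q r^{q\gamma+n}$ for all $r>0$ and large $n$. Since $x_n\to x_0$, any compact set $K$ is contained in $Q(x_n,R)$ for some fixed $R$ once $n$ is large, so $(f_n)$ is uniformly bounded in $L^q(K)$. Weak $L^q$ compactness and a diagonal argument over an exhaustion by compacts produce a subsequence (not relabeled) converging weakly in $L^q$ on every compact set to some $g \in L^q_{loc}(\mathbb{R}^n)$.

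Two checks then finish the proof. First, $g \in F$: fixing any $f \in F$, the differences $P_n := f_n - f$ all lie in the finite-dimensional subspace $\mathcal{P}_k \subset L^q_{loc}$, which is therefore weakly closed, so the weak limit $g - f$ lies in $\mathcal{P}_k$ as well. Second, $\eta_{q,\gamma}(g;x_0) \leq \lambda$: for any fixed $r>0$ and $\varepsilon>0$, $Q(x_0,r) \subset Q(x_n,r+\varepsilon)$ for $n$ large, hence
\[
\|f_n\|_{L^q(Q(x_0,r))} \leq \|f_n\|_{L^q(Q(x_n,r+\varepsilon))} \leq (\lambda + 1/n)(r+\varepsilon)^{\gamma+n/q}.
\]
Lower semicontinuity of the $L^q$ norm under weak convergence then gives $\|g\|_{L^q(Q(x_0,r))} \leq \lambda (r+\varepsilon)^{\gamma+n/q}$; letting $\varepsilon \to 0$ and dividing by $r^{n/q}$ yields $|g|_{q,Q(x_0,r)} \leq \lambda r^{\gamma}$, and taking the supremum over $r$ produces $\eta_{q,\gamma}(g;x_0) \leq \lambda$, so $N_{q,\gamma}(F;x_0) \leq \lambda$. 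The main subtlety I anticipate is ensuring that the weak limit lands in the correct equivalence class $F$; this is exactly where the finite dimensionality of $\mathcal{P}_k$ is used. Handling cubes based at nearby but distinct points is routine but requires the slight enlargement by $\varepsilon$ above.
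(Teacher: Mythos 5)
The paper does not prove this lemma; it simply cites Lemma 6 of Calder\'on's 1972 paper, so there is no in-text proof to compare against. Your argument, however, is correct and is the natural one for this statement: since $N_{q,\gamma}(F;\cdot)=\inf_{f\in F}\eta_{q,\gamma}(f;\cdot)$ is an infimum of lower semicontinuous functions (for each fixed $f$, $\eta_{q,\gamma}(f;\cdot)$ is a supremum of continuous functions of $x$), no soft argument applies, and one must produce a single competitor at $x_0$ from near-minimizers at $x_n$. Your weak-compactness-plus-diagonal extraction on an exhaustion by cubes does exactly that, and the two checks you flag are the right ones. Identifying the weak limit's class via the finite dimensionality (hence weak closedness) of $\mathcal{P}_k$ restricted to a cube with nonempty interior is correct, and the norm bound via weak lower semicontinuity of $\|\cdot\|_{L^q}$ plus the $\varepsilon$-enlargement $Q(x_0,r)\subset Q(x_n,r+\varepsilon)$ closes the argument.

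Two small points worth tightening. First, the step $\|f_n\|_{L^q(Q(x_n,r+\varepsilon))}\leq(\lambda+1/n)(r+\varepsilon)^{\gamma+n/q}$ should read $(N_{q,\gamma}(F;x_n)+1/n)(r+\varepsilon)^{\gamma+n/q}$; along your chosen subsequence $N_{q,\gamma}(F;x_n)\to\lambda$ but need not be $\leq\lambda$, so you should take a $\limsup$ in $n$ before invoking weak lower semicontinuity of the norm. This does not change the conclusion. Second, the weak compactness step uses reflexivity of $L^q$, hence genuinely needs $1<q<\infty$; that is part of the standing hypotheses of the paper, but it is worth stating explicitly since the argument would break at $q=1$. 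With those cosmetic fixes, this is a complete and correct self-contained proof.
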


\begin{lemma} Let $F \in E^{q}_{k}$ with $N_{q, \, \gamma}(F; x_0) < \infty,$ for some $x_0 \in \mathbb{R}^{n}$. Then:

\qquad

$(i)$ There exists a unique $f \in F$ such that $\eta_{q, \, \gamma} (f; x_0) < \infty$ and, therefore, $\eta_{q, \, \gamma} (f; x_0) = N_{q, \, \gamma}(F; x_0)$.

$(ii)$ For any cube $Q$, there is a constant $c$ depending on $x_0$ and $Q$ such that if $f$ is the unique representative of $F$ given in $(i)$, then
$$\|F\|_{q, \, Q} \leq |f|_{q, \, Q} \leq c \, \eta_{q, \, \gamma} (f; x_0) = c \, N_{q, \, \gamma}(F; x_0).$$

The constant $c$ can be chosen independently of $x_0$ provided that $x_0$ varies in a compact set.
\end{lemma}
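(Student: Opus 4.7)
The plan is to deduce both claims from a single rigidity fact: a polynomial $P\in\mathcal{P}_k$ that satisfies $\eta_{q,\gamma}(P;x_0)<\infty$ must vanish identically, because $\deg(P)\leq k<\gamma$.

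For uniqueness in $(i)$, I would suppose $f_1,f_2\in F$ both have finite $\eta_{q,\gamma}(\cdot;x_0)$; then $P:=f_1-f_2\in\mathcal{P}_k$ also does, by subadditivity of $|\cdot|_{q,Q(x_0,r)}$. Writing $P(x)=\sum_{|\alpha|\leq k}c_\alpha(x-x_0)^\alpha$ and rescaling via $\widetilde P(y):=P(x_0+ry)=\sum c_\alpha r^{|\alpha|}y^\alpha$, the change of variables $x=x_0+ry$ yields $|P|_{q,Q(x_0,r)}=|\widetilde P|_{q,Q(0,1)}$. Since $|\cdot|_{q,Q(0,1)}$ is a norm on the finite-dimensional space $\mathcal{P}_k$, it is equivalent to the coefficient-maximum norm, so
\[
|c_\alpha|\,r^{|\alpha|}\lesssim|\widetilde P|_{q,Q(0,1)}=|P|_{q,Q(x_0,r)}\leq r^\gamma\,\eta_{q,\gamma}(P;x_0)
\]
for every multiindex $\alpha$ with $|\alpha|\leq k$. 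Because $|\alpha|\leq k<\gamma$, letting $r\to 0^+$ forces $c_\alpha=0$, hence $P\equiv 0$. Existence is then immediate from the definition of $N_{q,\gamma}$ as an infimum: $N_{q,\gamma}(F;x_0)<\infty$ supplies some representative with finite $\eta_{q,\gamma}(\cdot;x_0)$, and by uniqueness it is the only such $f$ and must realise the infimum.

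For $(ii)$, the inequality $\|F\|_{q,Q}\leq|f|_{q,Q}$ is just the definition of the quotient seminorm. For the second inequality I would choose $r>0$ large enough that $Q\subseteq Q(x_0,r)$ and estimate
\[
|f|_{q,Q}^{q}=|Q|^{-1}\int_Q|f|^q\,dx\leq\frac{|Q(x_0,r)|}{|Q|}\,|f|_{q,Q(x_0,r)}^{q}\leq\frac{r^{n+q\gamma}}{|Q|}\,\eta_{q,\gamma}(f;x_0)^{q}.
\]
Taking $q$-th roots gives $|f|_{q,Q}\leq c\,\eta_{q,\gamma}(f;x_0)$ with $c=|Q|^{-1/q}r^{n/q+\gamma}$, a constant depending on $x_0$ and $Q$ only through $r$. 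When $x_0$ ranges over a compact set $K$, a single choice $r=r(K,Q)$ (for example, twice the diameter of $K$ plus $2\,l(Q)$ plus the distance from $0$ to the centre of $Q$) works for every $x_0\in K$, yielding the uniformity clause. The only genuinely delicate step in the whole argument is the polynomial-rigidity observation that drives uniqueness; it is precisely there that the decomposition $\gamma=k+t$ with $t\in(0,1]$, hence the strict inequality $k<\gamma$, is used.
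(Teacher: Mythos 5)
Your argument is correct and reproduces the standard line of reasoning behind Lemma~3 of Gatto--Jim\'enez--Segovia, which is all the paper itself offers (it simply delegates to that reference). The heart of~(i) — the rigidity fact that a polynomial $P\in\mathcal{P}_k$ with $\eta_{q,\gamma}(P;x_0)<\infty$ must vanish, proved via the rescaling $P\mapsto P(x_0+r\cdot)$, norm equivalence on the finite-dimensional space $\mathcal{P}_k$, and the strict inequality $k<\gamma$ coming from $\gamma=k+t$, $0<t\leq1$ — is exactly the mechanism the classical proof relies on, and your deduction of existence and of the identity $\eta_{q,\gamma}(f;x_0)=N_{q,\gamma}(F;x_0)$ from it is sound. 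Part~(ii) is the expected cube-inclusion estimate, and your observation that a single radius $r=r(K,Q)$ serves all $x_0$ in a compact set $K$ correctly settles the uniformity clause.
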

\begin{proof} The proof is similar to that of Lemma 3 in \cite{segovia}.
\end{proof}

\begin{corollary} If $\{ F_{j} \}$ is a sequence of elements of $E^{q}_{k}$ converging to $F$ in $\mathcal{H}^{p(\cdot)}_{q, \, \gamma}(\mathbb{R}^{n})$,  $0 < p_{-} \leq p_{+} < \infty$, then $\{ F_{j} \}$ converges to $F$ in $E^{q}_{k}$.
\end{corollary}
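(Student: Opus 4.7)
The plan is to show that for every cube $Q \subset \mathbb{R}^{n}$ one has $\|F_j - F\|_{q,Q} \to 0$, which is precisely convergence in the quotient topology of $E^q_k$. The key tool is part $(ii)$ of Lemma 4, which bounds the local seminorm $\|\cdot\|_{q,Q}$ of a class by its Calder\'on maximal function at a single point, with a constant that is uniform when the point ranges over a compact set.

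Fix a cube $Q$ and, once and for all, choose a compact set $K \subset \mathbb{R}^{n}$ of positive Lebesgue measure (for example a fixed closed ball). Since $G_j := F_j - F$ belongs to $\mathcal{H}^{p(\cdot)}_{q,\gamma}(\mathbb{R}^{n})$ and $\|N_{q,\gamma}(G_j;\cdot)\|_{p(\cdot)} \to 0$, the maximal function $N_{q,\gamma}(G_j;\cdot)$ is finite almost everywhere on $\mathbb{R}^{n}$, and in particular on $K$. Apply Lemma 4$(ii)$ to the class $G_j$ at each such $x_0 \in K$: there is a constant $c = c(K,Q)$, independent of $x_0 \in K$ and of $j$, such that
\[
\|G_j\|_{q,Q} \leq c \, N_{q,\gamma}(G_j;x_0)\quad\text{for a.e. } x_0 \in K.
\]

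Multiplying by $\chi_{K}(x_0)$ gives the pointwise inequality $\|G_j\|_{q,Q}\,\chi_K(x_0) \leq c\, N_{q,\gamma}(G_j;x_0)$ almost everywhere on $\mathbb{R}^{n}$. Taking the $L^{p(\cdot)}$-quasi-norm and using the homogeneity of $\|\cdot\|_{p(\cdot)}$,
\[
\|G_j\|_{q,Q}\, \|\chi_K\|_{p(\cdot)} \;\leq\; c\, \|N_{q,\gamma}(G_j;\cdot)\|_{p(\cdot)} \;=\; c\,\|F_j-F\|_{\mathcal{H}^{p(\cdot)}_{q,\gamma}} \;\longrightarrow\; 0.
\]
Since $|K|>0$ implies $\|\chi_K\|_{p(\cdot)}>0$, we conclude $\|F_j - F\|_{q,Q}\to 0$. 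As $Q$ was arbitrary, $F_j \to F$ in $E^q_k$.

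The only delicate point is ensuring the constant in Lemma 4$(ii)$ can indeed be taken uniformly in $x_0 \in K$ and independently of $j$; this is precisely the last assertion of Lemma 4$(ii)$, and it is what allows us to ``integrate out'' the pointwise bound against $\chi_K$ and obtain the $L^{p(\cdot)}$ estimate. Everything else reduces to the basic properties of the quasi-norm $\|\cdot\|_{p(\cdot)}$ recorded at the beginning of Section 2.
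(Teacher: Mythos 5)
Your argument is correct and is essentially the paper's own proof, just with the implicit steps written out: the paper applies Lemma~4\,(ii) pointwise with $Q$ itself playing the role of the compact set, multiplies by $\chi_Q$, takes $\|\cdot\|_{p(\cdot)}$, and divides by $\|\chi_Q\|_{p(\cdot)}$, exactly as you do with your auxiliary compact set $K$. Using a generic $K$ of positive measure rather than $Q$ is an inconsequential cosmetic difference.
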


\begin{proof} For any cube $Q$, by $(ii)$ of Lemma 3, we have
$$\| F- F_{j} \|_{q, \, Q} \leq c \, \| \chi_{Q} \|_{p(.)}^{-1} \| \chi_{Q} \,\, N_{q, \, \gamma}(F - F_{j}; \cdot ) \|_{p(\cdot)} \leq c \, \| F - F_{j} \|_{\mathcal{H}^{p(\cdot)}_{q, \, \gamma}},$$
which proves the corollary.
\end{proof}

\begin{lemma} Let $\{ F_{j} \}$ be a sequence in $E^{q}_{k}$ such that the series $\sum_j N_{q, \, \gamma}(F_{j}; \, x )$ is finite p.p.x in $\mathbb{R}^{n}$. Then

\qquad

$(i)$ The series $\sum_j F_j$ converges in $E_{k}^{q}$ to an element $F$ and $$N_{q, \, \gamma}(F; \, x ) \leq \sum_j N_{q, \, \gamma}(F_{j}; \, x ), \,\,\, p.p.x\in \mathbb{R}^{n}.$$

$(ii)$ Let $x_0$ be a point where $\sum_j N_{q, \, \gamma}(F_{j}; \, x_0 )$ is finite. If $f_j$ is the unique representative of $F_j$ satisfying
$\eta_{q, \, \gamma} (f_j; x_0) = N_{q, \, \gamma}(F_j; x_0)$, then $\sum_j f_j$ converges in $L^{q}_{loc}(\mathbb{R}^{n})$ to a function $f$ that is the unique representative of $F$ satisfying $\eta_{q, \, \gamma} (f; x_0) = N_{q, \, \gamma}(F; x_0)$
\end{lemma}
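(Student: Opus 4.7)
The plan is to first establish convergence of the series at a single good point in order to define $F$, then upgrade this to the pointwise inequality at a.e.\ $y$, and finally pin down the distinguished representative in $(ii)$. To begin, I fix $x_0$ (available by hypothesis) with $S(x_0) := \sum_j N_{q,\gamma}(F_j; x_0) < \infty$. For each $j$, Lemma 4$(i)$ supplies a unique representative $f_j \in F_j$ with $\eta_{q,\gamma}(f_j; x_0) = N_{q,\gamma}(F_j; x_0)$. By Lemma 4$(ii)$, for every cube $Q$ there is a constant $c_Q$ with $|f_j|_{q, Q} \le c_Q\, N_{q,\gamma}(F_j; x_0)$, and summing in $j$ gives $\sum_j |f_j|_{q, Q} \le c_Q\, S(x_0) < \infty$. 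Thus $\sum_j f_j$ converges absolutely in $L^q(Q)$ for every cube $Q$, i.e.\ in $L^{q}_{loc}(\mathbb{R}^{n})$, to some function $f$. Setting $F := [f] \in E^{q}_{k}$, the partial sums $\sum_{j \le N} F_j$ converge to $F$ in $E^{q}_{k}$.

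The core of $(i)$ is the pointwise inequality. Fix $y$ with $T(y) := \sum_j N_{q,\gamma}(F_j; y) < \infty$. Lemma 4 applied at $y$ yields, for each $j$, a unique representative $g_j \in F_j$ with $\eta_{q,\gamma}(g_j; y) = N_{q,\gamma}(F_j; y)$, and the same summability argument produces a limit $g \in L^{q}_{loc}(\mathbb{R}^{n})$ of $\sum_j g_j$. The technical crux is to verify that $g$ represents the class $F$ already built at $x_0$: since $g_j$ and $f_j$ both lie in $F_j$, each difference $P_j := g_j - f_j$ belongs to $\mathcal{P}_k$, and subtracting the two $L^{q}_{loc}$-convergent series gives $g - f = \sum_j P_j$ in $L^{q}_{loc}$; because $\mathcal{P}_k$ is finite-dimensional and hence closed in $L^{q}_{loc}(\mathbb{R}^{n})$, the limit lies in $\mathcal{P}_k$, so $g \in F$. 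Now Minkowski's inequality on $Q(y, r)$ yields
\[
r^{-\gamma}\, |g|_{q, Q(y, r)} \le r^{-\gamma} \sum_j |g_j|_{q, Q(y, r)} \le \sum_j \eta_{q, \gamma}(g_j; y) = T(y)
\]
for every $r > 0$, hence $N_{q, \gamma}(F; y) \le \eta_{q, \gamma}(g; y) \le T(y)$, establishing $(i)$.

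Part $(ii)$ follows by specializing the previous step to $y = x_0$: by the uniqueness clause of Lemma 4$(i)$, $g_j = f_j$ for each $j$, so $g = f$. The inequality above yields $\eta_{q, \gamma}(f; x_0) \le T(x_0) < \infty$, and then Lemma 4$(i)$ applied to $F$ identifies $f$ as the unique representative of $F$ with finite $\eta_{q, \gamma}(\cdot; x_0)$ and confirms $\eta_{q, \gamma}(f; x_0) = N_{q, \gamma}(F; x_0)$. I expect the only subtle obstacle to be the identification $g \in F$ across different base points; this is handled cleanly by the closedness of the finite-dimensional space $\mathcal{P}_k$ in $L^{q}_{loc}(\mathbb{R}^{n})$, and everything else reduces to Lemma 4 together with Minkowski's inequality inside each cube.
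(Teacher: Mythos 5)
Your argument is correct: the construction of $F$ from the representatives $f_j$ at a fixed point $x_0$ via $L^q_{\mathrm{loc}}$-absolute convergence (using Lemma 4$(ii)$), the identification of $g$ with $F$ at an arbitrary good point $y$ through the closedness of the finite-dimensional space $\mathcal{P}_k$, and the Minkowski estimate $\eta_{q,\gamma}(g;y)\le\sum_j\eta_{q,\gamma}(g_j;y)$ together give both $(i)$ and $(ii)$ cleanly. The paper defers to Lemma~4 of Gatto--Jim\'enez--Segovia, and your proof follows precisely the same representative-plus-Minkowski strategy that argument uses, so this is essentially the same approach.
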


\begin{proof} The proof is similar to that of Lemma 4 in \cite{segovia}.
\end{proof}

On the set of the all measurable function $f$ we define the modular function $\rho_{p(.)}$ by
$$\rho_{p(.)}(f) = \int_{\mathbb{R}^{n}} \, |f(x)|^{p(x)} \, dx.$$
It is well known that if $0 < p_{-} \leq p_{+} < \infty$ and $0 \neq f \in L^{p(\cdot)}(\mathbb{R}^{n})$, then
$$\rho_{p(.)}\left(\|f\|_{p(.)}^{-1} \, f \right) = \int_{\mathbb{R}^{n}} \, \left(\frac{|f(x)|}{\|f\|_{p(.)}} \right)^{p(x)} \, dx=1.$$

\begin{lemma} Let $p(.)$ a measurable function such that $0 < p_{-} \leq p_{+} < \infty$. Then $f\in L^{p(.)}(\mathbb{R}^{n})$
if and only if $\rho_{p(.)}(f) < \infty$.
\end{lemma}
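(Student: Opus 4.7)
The plan is to prove both implications directly from the two definitions, exploiting the two-sided bound $0 < p_- \leq p(x) \leq p_+ < \infty$ through the elementary observation that for any $\lambda > 0$ one has
\[
\min\{\lambda^{p_-}, \lambda^{p_+}\} \leq \lambda^{p(x)} \leq \max\{\lambda^{p_-}, \lambda^{p_+}\}
\]
pointwise in $x$. This single inequality is what drives both directions.

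For the forward implication, I would assume $f \in L^{p(\cdot)}(\mathbb{R}^n)$; by definition there is some $\lambda > 0$ with $\rho_{p(\cdot)}(f/\lambda) < \infty$. Writing $|f(x)|^{p(x)} = \lambda^{p(x)} |f(x)/\lambda|^{p(x)}$ and applying the upper bound above, I obtain
\[
\rho_{p(\cdot)}(f) \leq \max\{\lambda^{p_-}, \lambda^{p_+}\} \, \rho_{p(\cdot)}(f/\lambda) < \infty,
\]
using $p_+ < \infty$ to keep the constant finite.

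For the converse I would assume $\rho_{p(\cdot)}(f) < \infty$ and produce an explicit $\lambda > 0$ making $\rho_{p(\cdot)}(f/\lambda) \leq 1$. If $\rho_{p(\cdot)}(f) \leq 1$ the definition of $L^{p(\cdot)}(\mathbb{R}^n)$ is already satisfied with $\lambda = 1$. Otherwise I take $\lambda = \rho_{p(\cdot)}(f)^{1/p_-} > 1$; since $\lambda \geq 1$ forces $\lambda^{-p(x)} \leq \lambda^{-p_-}$ pointwise, I obtain
\[
\rho_{p(\cdot)}(f/\lambda) = \int_{\mathbb{R}^n} \lambda^{-p(x)} |f(x)|^{p(x)} \, dx \leq \lambda^{-p_-} \rho_{p(\cdot)}(f) = 1,
\]
so $f \in L^{p(\cdot)}(\mathbb{R}^n)$ with $\|f\|_{p(\cdot)} \leq \lambda$.

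There is no substantive obstacle: the argument is a direct unpacking of the two definitions. What really matters is the hypothesis $0 < p_- \leq p_+ < \infty$, which simultaneously keeps $\max\{\lambda^{p_-}, \lambda^{p_+}\}$ finite (so that the forward direction goes through) and makes the exponent $1/p_-$ meaningful (so that the explicit choice of $\lambda$ in the converse is finite). Without either one-sided bound on the exponent the equivalence would fail.
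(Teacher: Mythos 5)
Your proof is correct and, for the substantive direction ($f\in L^{p(\cdot)}\Rightarrow\rho_{p(\cdot)}(f)<\infty$), it is essentially the paper's argument: write $|f|^{p(x)}=\lambda^{p(x)}|f/\lambda|^{p(x)}$ and pull out the constant $\max\{\lambda^{p_-},\lambda^{p_+}\}$ (the paper first normalizes to $\lambda>1$, which lets it use just $\lambda^{p_+}$). For the other direction the paper merely observes that $\rho_{p(\cdot)}(f)<\infty$ already satisfies the membership definition with $\lambda=1$; your explicit choice $\lambda=\rho_{p(\cdot)}(f)^{1/p_-}$ is more than is needed but correct and in the same spirit as the paper's Lemma 8.
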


\begin{proof} Clearly, if $\rho_{p(.)}(f) < \infty$, then $f\in L^{p(.)}(\mathbb{R}^{n})$. Conversely, if $f\in L^{p(.)}(\mathbb{R}^{n})$, then we have that
$\rho_{p(.)}(f / \lambda) < \infty$ for some $\lambda >1$. Then
$$\rho_{p(.)}(f)= \int_{\mathbb{R}^{n}} \, \left| \frac{\lambda f(x)}{\lambda} \right|^{p(x)} \, dx \leq \lambda^{p_{+}} \rho_{p(.)}(f / \lambda) < \infty.$$
\end{proof}

\begin{lemma} Let $p(.)$ a measurable function such that $0 < p_{-} \leq p_{+} < \infty$. If $\{ f_j \}$ is a sequence of measurable functions such that $\rho_{p(.)}(f_j) \rightarrow 0$, then $\| f_j \|_{p(.)} \rightarrow 0$.
\end{lemma}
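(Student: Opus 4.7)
The plan is to exploit the explicit scaling behaviour of the modular together with the hypothesis $p_{+}<\infty$. Fix an arbitrary $\varepsilon\in(0,1)$; the goal is to show $\|f_{j}\|_{p(\cdot)}\le\varepsilon$ for every sufficiently large $j$, since $\varepsilon$ is then arbitrary and the conclusion follows. By the Luxemburg definition of the norm, it suffices to produce $J$ such that $\rho_{p(\cdot)}(f_{j}/\varepsilon)\le 1$ for all $j\ge J$.

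For this I would simply estimate pointwise: for each $x\in\mathbb{R}^{n}$ and each $\varepsilon\in(0,1)$ one has $\varepsilon^{-p(x)}\le\varepsilon^{-p_{+}}$, since the exponential function $t\mapsto\varepsilon^{-t}$ is increasing when $\varepsilon<1$ and $p_{+}<\infty$ is finite by assumption. Integrating gives
\[
\rho_{p(\cdot)}(f_{j}/\varepsilon)=\int_{\mathbb{R}^{n}}\varepsilon^{-p(x)}|f_{j}(x)|^{p(x)}\,dx\le \varepsilon^{-p_{+}}\rho_{p(\cdot)}(f_{j}).
\]
Since $\rho_{p(\cdot)}(f_{j})\to 0$, there exists $J=J(\varepsilon)$ with $\rho_{p(\cdot)}(f_{j})\le\varepsilon^{p_{+}}$ for every $j\ge J$, whence $\rho_{p(\cdot)}(f_{j}/\varepsilon)\le 1$ and therefore $\|f_{j}\|_{p(\cdot)}\le\varepsilon$ for such $j$. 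Letting $\varepsilon\to 0^{+}$ yields $\|f_{j}\|_{p(\cdot)}\to 0$.

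There is essentially no serious obstacle; the only delicate point is recognising that finiteness of $p_{+}$ is exactly what makes the passage from the modular to the norm quantitative (without it, $\varepsilon^{-p(x)}$ could blow up on a set of positive measure and the above bound would fail). Note also that Lemma 6 is not needed here: we never have to verify $f_{j}\in L^{p(\cdot)}(\mathbb{R}^{n})$ separately, since the finiteness $\rho_{p(\cdot)}(f_{j}/\varepsilon)<\infty$ for $j\ge J$ already gives membership as a by-product of the estimate.
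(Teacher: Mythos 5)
Your proof is correct and uses essentially the same mechanism as the paper: both arguments rest on the pointwise estimate $\lambda^{-p(x)}\le\lambda^{-p_{+}}$ for $0<\lambda<1$, applied to pass from the modular to the Luxemburg norm. The paper chooses the specific scaling $\lambda_j=\rho_{p(\cdot)}(f_j)^{1/p_{+}}$ to produce the explicit bound $\|f_j\|_{p(\cdot)}\le\rho_{p(\cdot)}(f_j)^{1/p_{+}}$, whereas you fix $\lambda=\varepsilon$ and obtain $\|f_j\|_{p(\cdot)}\le\varepsilon$ for large $j$; these are algebraically equivalent, so the two write-ups differ only cosmetically.
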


\begin{proof} Suppose that $\rho_{p(.)}(f_j) \rightarrow 0$. Given $0 < \epsilon < 1$ for sufficiently large $j$ we have $\rho_{p(.)}(f_j) \leq \epsilon$ and so
\[
\rho_{p(.)}\left(f_j\rho_{p(.)}(f_j)^{-1/p_{+}}\right) \leq \rho_{p(.)}(f_j)^{-1} \rho_{p(.)}(f_j) =1,
\]
from this it follows that $\| f_j \|_{p(.)} \leq \rho_{p(.)}(f_j)^{1/p_{+}} \leq \epsilon^{1/p_{+}}$. Thus, $\| f_j \|_{p(.)} \rightarrow 0$.
\end{proof}

\begin{proposition} The space $\mathcal{H}^{p(\cdot)}_{q, \, \gamma}(\mathbb{R}^{n})$, $0 < p_{-} \leq p_{+} < \infty$, is complete.
\end{proposition}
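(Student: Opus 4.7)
The plan is the standard strategy for completeness in a quasi-Banach space whose quasi-norm satisfies an $\underline{p}$-triangle inequality: reduce to showing that every $\underline{p}$-absolutely summable series converges. Given a Cauchy sequence $\{F_j\}$ in $\mathcal{H}^{p(\cdot)}_{q,\gamma}(\mathbb{R}^n)$, I would first pass to a subsequence $\{F_{j_k}\}$ with $\|F_{j_{k+1}}-F_{j_k}\|_{\mathcal{H}^{p(\cdot)}_{q,\gamma}}^{\underline{p}}\le 2^{-k}$ and set $G_k:=F_{j_{k+1}}-F_{j_k}$. The iterated $\underline{p}$-triangle inequality applied to $\|\cdot\|_{p(\cdot)}$ gives, for every $N$,
$$\Bigl\|\sum_{k=1}^{N} N_{q,\gamma}(G_k;\cdot)\Bigr\|_{p(\cdot)}^{\underline{p}}\le \sum_{k=1}^{N}\|N_{q,\gamma}(G_k;\cdot)\|_{p(\cdot)}^{\underline{p}}\le 1.$$
Letting $N\to\infty$ and applying the monotone convergence theorem to the modular $\rho_{p(\cdot)}$, Lemma 7 yields that $H(x):=\sum_{k}N_{q,\gamma}(G_k;x)$ lies in $L^{p(\cdot)}(\mathbb{R}^n)$; in particular $H$ is finite almost everywhere.

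With the hypothesis of Lemma 6 now satisfied, the series $\sum_k G_k$ converges in $E^q_k$ to some element $G$ with $N_{q,\gamma}(G;x)\le H(x)$ a.e. Set $F:=F_{j_1}+G\in E^q_k$; since $N_{q,\gamma}(F;\cdot)\le N_{q,\gamma}(F_{j_1};\cdot)+H\in L^{p(\cdot)}$, the class $F$ belongs to $\mathcal{H}^{p(\cdot)}_{q,\gamma}(\mathbb{R}^n)$. Applying Lemma 6 to the tail $\sum_{i\ge k}G_i$, which in $E^q_k$ equals $F-F_{j_k}$, one obtains $N_{q,\gamma}(F-F_{j_k};x)\le \sum_{i\ge k}N_{q,\gamma}(G_i;x)$, and the same monotone-convergence argument produces
$$\|F-F_{j_k}\|_{\mathcal{H}^{p(\cdot)}_{q,\gamma}}^{\underline{p}}\le \sum_{i\ge k}\|N_{q,\gamma}(G_i;\cdot)\|_{p(\cdot)}^{\underline{p}}\le \sum_{i\ge k}2^{-i}\longrightarrow 0.$$
Hence $F_{j_k}\to F$ in $\mathcal{H}^{p(\cdot)}_{q,\gamma}(\mathbb{R}^n)$, and the Cauchy property of $\{F_j\}$ combined with the quasi-triangle inequality upgrades this to $F_j\to F$.

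The main obstacle, while modest, is to pass rigorously from the finite-sum version of the $\underline{p}$-triangle inequality to its infinite version in the variable-exponent setting. This is achieved by applying monotone convergence to $\rho_{p(\cdot)}$ evaluated on the partial sums of the nonnegative functions $N_{q,\gamma}(G_k;\cdot)$, using Lemma 7 to translate finiteness of the modular into membership in $L^{p(\cdot)}$, and Lemma 8 to route modular smallness back to norm smallness. Everything else reduces to a routine application of Lemma 6.
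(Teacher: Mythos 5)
Your proof is correct and follows essentially the same route as the paper: both reduce completeness to convergence of $\underline{p}$-absolutely summable series, bound the partial sums via the $\underline{p}$-triangle inequality, pass to the infinite sum by applying Fatou's lemma (or, equivalently, monotone convergence) to the modular $\rho_{p(\cdot)}$, invoke Lemmas 6 and 7 to produce the limit $F$ in $E^q_k$, and then estimate the tail in the same way. The only cosmetic difference is that the paper asserts the Riesz--Fisher property directly rather than spelling out the reduction from a Cauchy sequence to a rapidly convergent subsequence.
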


\begin{proof} It is enough to show that $\mathcal{H}^{p(\cdot)}_{q, \, \gamma}$ has the Riesz-Fisher property: given any sequence $\{ F_j \}$ in $\mathcal{H}^{p(\cdot)}_{q, \, \gamma}$ such that $$\sum_{j} \| F_j \|_{\mathcal{H}^{p(\cdot)}_{q, \, \gamma}}^{\underline{p}} < \infty,$$
the series $\sum_{j} F_j$ converges in $\mathcal{H}^{p(\cdot)}_{q, \, \gamma}$. \\
Let $1 \leq l$ be fixed, then $$\left\| \sum_{j=l}^{k} N_{q, \, \gamma}(F_{j}; \, . ) \right\|_{p(.)}^{\underline{p}} \leq \sum_{j=l}^{k} \left\| N_{q, \, \gamma}(F_{j}; \, . ) \right\|_{p(.)}^{\underline{p}} \leq \sum_{j=l}^{\infty} \| F_j \|_{\mathcal{H}^{p(\cdot)}_{q, \, \gamma}}^{\underline{p}} =: \alpha_l < \infty,$$
for all $k \geq l$, thus $$\int_{\mathbb{R}^{n}} \, \left( \alpha_{l}^{-1/ \underline{p}} \, \sum_{j=l}^{k} N_{q, \, \gamma}(F_{j}; \, x ) \right)^{p(x)} \, dx$$  $$\leq \int_{\mathbb{R}^{n}} \left( \left\| \sum_{j=l}^{k} N_{q, \, \gamma}(F_{j}; \, . ) \right\|_{p(.)}^{-1} \, \sum_{j=l}^{k} N_{q, \, \gamma}(F_{j}; \, x ) \right)^{p(x)} \, dx =1, \,\,\, \forall \, k \geq l$$ it follows from Fatou's lemma as $k \rightarrow \infty$ that
$$\int_{\mathbb{R}^{n}} \, \left( \alpha_{l}^{-1/ \underline{p}} \, \sum_{j=l}^{\infty} N_{q, \, \gamma}(F_{j}; \, x ) \right)^{p(x)} \, dx \leq 1$$
thus
\begin{equation}
\left\|  \sum_{j=l}^{\infty} N_{q, \, \gamma}(F_{j}; \, \cdot) \right\|_{p(\cdot)}^{\underline{p}} \leq \alpha_{l} = \sum_{j=l}^{\infty} \| F_j \|_{\mathcal{H}^{p(\cdot)}_{q, \, \gamma}}^{\underline{p}} < \infty, \,\,\,\, \forall \, l \geq 1  \label{serie}.
\end{equation}
Taking $l=1$ in (\ref{serie}), from Lemma 7 it follows that $\sum_{j} N_{q, \, \gamma}(F_{j}; \, x)$ is finite p.p.$x \in \mathbb{R}^{n}$. Then, by $(i)$ of Lemma 6, the series $\sum_j F_j$ converges in $E_{k}^{q}$ to an element $F$. Now $$N_{q, \, \gamma}\left( F - \sum_{j=1}^{k} F_j; x \right) \leq \sum_{j=k+1}^{\infty} N_{q, \, \gamma} (F_j; x),$$
from this and (\ref{serie}) we get
$$\left\| F - \sum_{j=1}^{k} F_j \right\|_{\mathcal{H}^{p(\cdot)}_{q, \, \gamma}}^{\underline{p}} \leq \sum_{j=k+1}^{\infty} \| F_j \|_{\mathcal{H}^{p(\cdot)}_{q, \, \gamma}}^{\underline{p}},$$
and since the right-hand side tends to $0$ as $k \rightarrow \infty$, the series $\sum_{j}F_j$ converges to $F$ in $\mathcal{H}^{p(\cdot)}_{q, \, \gamma}(\mathbb{R}^{n})$.
\end{proof}

In the paper \cite{nakai}, E. Nakai and Y. Sawano give a variety of distinct
approaches, based on differing definitions, all lead to the same notion of the
variable Hardy space $H^{p(.)}.$

We recall the definition and the atomic decomposition of the Hardy spaces with variable exponents.

\qquad

Topologize $\mathcal{S}(\mathbb{R}^{n})$ by the collection of semi-norms $\{ p_{N} \}_{N \in \mathbb{N}}$ given by
$$p_{N}(\varphi) = \sum
\limits_{\left\vert \mathbf{\beta}\right\vert \leq N}\sup\limits_{x\in
\mathbb{R}^{n}}\left(  1+\left\vert x\right\vert \right)  ^{N}\left\vert
\partial^{\mathbf{\beta}}\varphi(x)\right\vert,$$
for each $N \in \mathbb{N}$. We set $\mathcal{F}_{N}=\left\{  \varphi\in \mathcal{S}(\mathbb{R}^{n}): p_{N}(\varphi) \leq 1 \right\}$. Let $f \in \mathcal{S}'(\mathbb{R}^{n})$, we denote by $\mathcal{M}_{\mathcal{F}_{N}}$ the grand maximal
operator given by
\[
\mathcal{M}_{\mathcal{F}_{N}}f(x)=\sup\limits_{t>0}\sup\limits_{\varphi\in\mathcal{F}_{N}%
}\left\vert \left(  t^{-n}\varphi(t^{-1} \cdot)\ast f\right)  \left(  x\right)
\right\vert ,
\]
where $N$ is a large and fix integer.

\qquad

The Hardy space with variable exponents $H^{p(\cdot)}(\mathbb{R}^{n})$ is the set of all $f \in S^{\prime}
(\mathbb{R}^{n})$ for which $\mathcal{M}_{\mathcal{F}_{N}}f \in L^{p(\cdot)}(\mathbb{R}^{n}).$ In this case we define $\left\Vert f\right\Vert _{H^{p(\cdot)}
}=\left\Vert \mathcal{M}_{\mathcal{F}_{N}}f\right\Vert _{p(\cdot)}$.

\qquad

Let $\phi \in \mathcal{S}(\mathbb{R}^{n})$ be a function such that $\int \phi(x) dx \neq 0$. For $f \in \mathcal{S}'(\mathbb{R}^{n})$, we define the maximal function $M_{\phi}f$ by
$$M_{\phi}f(x)= \sup_{t>0} \left\vert \left(  t^{-n}\phi(t^{-1} \, \cdot)\ast f\right)  \left(  x\right)\right\vert.$$
Theorem 1.2 in \cite{nakai} asserts that the quantities $\| M_{\phi}f \|_{p(\cdot)}$ and $\left\Vert \mathcal{M}_{\mathcal{F}_{N}}f\right\Vert _{p(\cdot)}$ are comparable, with bounds independent of $f$ if $N$ is sufficiently large.

\begin{definition}
$((p(\cdot),p_{0},d)-atom)$. Let $p(\cdot):\mathbb{R}^{n}\rightarrow \left( 0,\infty
\right) $, $0<p_{-}\leq p_{+}<p_{0}\leq \infty $ and $p_{0}\geq 1.$ Fix an
integer $d\geq d_{p(\cdot)}=\min \left\{ l\in \mathbb{N\cup }\left\{ 0\right\}
:p_{-}(n+l+1)>n\right\} .$ A function $a$ on $\mathbb{R}^{n}$ is called an $
(p(\cdot),p_{0},d)-$atom if there exists a cube $Q$ such that\newline
$a_{1})$ $\textit{supp}\left( a\right) \subset Q,$\newline
$a_{2})$ $\left\Vert a\right\Vert _{p_{0}}\leq \frac{\left\vert Q\right\vert
^{\frac{1}{p_{0}}}}{\left\Vert \chi _{Q}\right\Vert _{p(\cdot)}},$\newline
$a_{3})$ $\int a(x)x^{\alpha }dx=0$ for all $\left\vert \alpha \right\vert
\leq d.$
\end{definition}

\begin{remark} Let $a$ be an $(p(\cdot),p_{0},d)-atom$ and $1 < s < p_0$, then H\"older's inequality implies $\| a \|_{s} \leq \frac{|Q|^{1/s}}{\| \chi_{Q} \|_{p(\cdot)}}.$
\end{remark}

\begin{definition}
For sequences of nonnegative numbers $\left\{ k_{j}\right\} _{j=1}^{\infty }$
and cubes $\left\{ Q_{j}\right\} _{j=1}^{\infty }$ and for a function $p(\cdot):%
\mathbb{R}^{n}\rightarrow \left( 0,\infty \right) $, we define
\[
\mathcal{A}\left( \left\{ k_{j}\right\} _{j=1}^{\infty },\left\{
Q_{j}\right\} _{j=1}^{\infty },p(\cdot)\right) =\left\Vert \left\{
\sum\limits_{j=1}^{\infty }\left( \frac{k_{j}\chi _{Q_{j}}}{\left\Vert \chi
_{Q_{j}}\right\Vert _{p(\cdot)}}\right) ^{\underline{p}}\right\} ^{\frac{1}{%
\underline{p}}}\right\Vert _{p(\cdot)}.
\]%
\newline
The space $H_{atom}^{p(\cdot),p_{0},d}\left( \mathbb{R}^{n}\right) $ is the set
of all distributions $f\in S^{\prime }(\mathbb{R}^{n})$ such that it can be
written as
\begin{equation}
f=\sum\limits_{j=1}^{\infty }k_{j}a_{j}  \label{desc. atomica}
\end{equation}%
in $S^{\prime }(\mathbb{R}^{n}),$ where $\left\{ k_{j}\right\}
_{j=1}^{\infty }$ is a sequence of non negative numbers, the $a_{j}`s$ are $%
(p(\cdot),p_{0},d)-$atoms and $\mathcal{A}\left( \left\{ k_{j}\right\}
_{j=1}^{\infty },\left\{ Q_{j}\right\} _{j=1}^{\infty },p(\cdot)\right) <\infty
. $ One defines
\[
\left\Vert f\right\Vert _{H_{atom}^{p(\cdot),p_{0},d}}=\inf \mathcal{A}\left(
\left\{ k_{j}\right\} _{j=1}^{\infty },\left\{ Q_{j}\right\} _{j=1}^{\infty
},p(\cdot)\right)
\]%
where the infimun is taken over all admissible expressions as in (\ref{desc.
atomica}).
\end{definition}

Theorem 4.6 in \cite{nakai} asserts that the quantities $\left\Vert f\right\Vert
_{H_{atom}^{p(\cdot),p_{0},d}}$ and $\left\Vert f\right\Vert _{H^{p(\cdot)}}$ are comparable. Moreover, $f$ admits an atomic decomposition $f=\sum\limits_{j=1}^{\infty }k_{j}a_{j}$ such that $$\mathcal{A}\left( \left\{ k_{j}\right\}
_{j=1}^{\infty },\left\{ Q_{j}\right\} _{j=1}^{\infty },p(\cdot)\right) \leq c \, \|f \|_{H^{p(\cdot)}}.$$

\qquad

Let $h$ be the function defined by
\begin{equation}
h(x) = \left\{ \begin{array}{cc}
                 |x|^{2m-n} \ln{|x|}, & \text{if} \,\, n \,\, \text{is even and} \,\, 2m-n \geq 0  \\
                 |x|^{2m-n}, & \text{otherwise}
               \end{array} \right. . \label{func h}
\end{equation}

It is well known that if $a$ is a bounded function with compact support, its potential $b$, defined as $$b(x) = \int_{\mathbb{R}^{n}} h(x-y) a(y) dy,$$
is a locally bounded function and $\Delta^{m} b = a$ in the sense of distributions. For these potentials, we have the following

\begin{lemma} Let $a(\cdot)$ be an $(p(\cdot),p_{0},d)-$atom with $d = \max \{d_{p(\cdot)}, 2m-1\}$ and assume that $Q(x_0, r)$ is the cube containing the support of $a(.)$ in the definition of $(p(\cdot),p_{0},d)-$atom. If $$b(x) = \int_{\mathbb{R}^{n}} h(x-y) a(y) dy,$$ then for $|x - x_0| \geq \sqrt{n}r$ and every multiindex $\alpha$, there exists $c_{\alpha}$ such that $$\left| (\partial^{\alpha}b)(x) \right| \leq c_{\alpha} \, r^{2m+n} \| \chi_{Q} \|_{p(\cdot)}^{-1} |x - x_0|^{-n-\alpha}$$ holds.
\end{lemma}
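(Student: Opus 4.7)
The plan is to exploit the vanishing moments of the atom via a Taylor expansion of the kernel $h(x-y)$ in the $y$-variable around $y = x_0$. Since $d \geq 2m-1$ we have in particular $\int a(y)\,y^{\beta}\,dy = 0$ for every $|\beta| \leq 2m-1$, so for any fixed $x$ with $|x-x_0| \geq \sqrt{n}\,r$ I can write
\[
(\partial^{\alpha} b)(x) \;=\; \int_{Q} \bigl[ (\partial_{x}^{\alpha} h)(x-y) \;-\; P_{\alpha,x}(y)\bigr]\, a(y)\,dy,
\]
where $P_{\alpha,x}(y)$ is the degree-$(2m-1)$ Taylor polynomial in $y$ of $y \mapsto (\partial_{x}^{\alpha} h)(x-y)$ centered at $y=x_{0}$. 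The integral of $P_{\alpha,x}\cdot a$ vanishes by the moment conditions.

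Next, I estimate the remainder. By Taylor's theorem,
\[
\bigl| (\partial_{x}^{\alpha} h)(x-y) - P_{\alpha,x}(y) \bigr|
\;\leq\; c\, |y-x_{0}|^{2m} \sup_{\xi \in [x_{0},y]} \max_{|\beta| = |\alpha|+2m} \bigl| (\partial^{\beta} h)(x-\xi)\bigr|.
\]
For $y \in Q(x_0, r)$ we have $|\xi - x_0| \leq \sqrt{n}\,r/2 \leq \tfrac{1}{2}|x-x_0|$, hence $|x-\xi| \sim |x-x_{0}|$. The key analytic input is then the pointwise bound
\[
|(\partial^{\beta} h)(z)| \;\lesssim\; |z|^{2m-n-|\beta|}, \qquad |\beta| > 2m-n,
\]
valid for both alternatives in the definition of $h$. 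In the purely homogeneous case $h(z)=|z|^{2m-n}$ this is immediate by homogeneity; in the logarithmic case ($n$ even, $2m-n \geq 0$) successive differentiation produces finitely many terms of the form $z^{\gamma}|z|^{c}Q(\ln|z|)$, and once the order of differentiation exceeds $2m-n$ the logarithm is annihilated by the differentiation of the polynomial prefactor, leaving a pure power. Since $|\alpha|+2m \geq 2m > 2m-n$ for $n\geq 1$, this estimate applies with $|\beta|=|\alpha|+2m$, giving the remainder bound $|y-x_{0}|^{2m}\,|x-x_{0}|^{-n-|\alpha|}$.

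Plugging this in, $|y-x_{0}| \leq \sqrt{n}\,r$ yields
\[
|(\partial^{\alpha} b)(x)| \;\lesssim\; r^{2m}\,|x-x_{0}|^{-n-|\alpha|} \int_{Q}|a(y)|\,dy,
\]
and by H\"older's inequality combined with the size condition $(a_{2})$ in the atom definition we get $\|a\|_{1} \leq |Q|^{1-1/p_0}\|a\|_{p_0} \leq |Q|\,\|\chi_{Q}\|_{p(\cdot)}^{-1} = r^{n}\,\|\chi_{Q}\|_{p(\cdot)}^{-1}$. Combining the two gives the stated bound $|(\partial^{\alpha} b)(x)| \leq c_{\alpha}\,r^{2m+n}\,\|\chi_{Q}\|_{p(\cdot)}^{-1}\,|x-x_{0}|^{-n-|\alpha|}$. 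The only nontrivial step is the pointwise derivative estimate for $h$ in the logarithmic case, which is a routine but slightly tedious induction on the order of differentiation; everything else is standard Taylor-with-vanishing-moments plus H\"older.
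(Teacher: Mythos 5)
Your proof is correct and follows essentially the same route as the paper: subtract the degree-$(2m-1)$ Taylor polynomial of $(\partial^\alpha h)(x-\cdot)$ at $x_0$ (which integrates to zero against $a$ by the moment conditions), bound the remainder by the $(|\alpha|+2m)$-th derivatives of $h$ evaluated at a point $x-\xi$ comparable to $x-x_0$, invoke homogeneity of those derivatives (degree $-n-|\alpha|$, with the log disappearing in the even, $2m-n\geq 0$ case after enough differentiations), and finish with H\"older and the size condition $(a_2)$. The only stylistic difference is that you spell out the pointwise derivative estimate for $h$ in the logarithmic case, whereas the paper treats it as already covered by Lemma 14 (Lemma 8 of Gatto--Jim\'enez--Segovia); substantively the argument is the same.
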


\begin{proof} Since $a(\cdot)$ has vanishing moments up to the order $d \geq 2m-1$, we have
$$(\partial^{\alpha}b)(x) = \int_{Q(x_0, r)} \, (\partial^{\alpha}h)(x-y) a(y) \, dy$$
$$= \int_{Q(x_0, r)} \, \left[(\partial^{\alpha}h)(x-y) - \sum_{|\beta| \leq 2m-1} (\partial^{\alpha + \beta}h)(x-x_0) \frac{(x_0 -y)^{\beta}}{\beta !} \right] a(y) \, dy$$
$$= \int_{Q(x_0, r)} \, \left[\sum_{|\beta| = 2m} (\partial^{\alpha + \beta}h)(x- \xi) \frac{(x_0 -y)^{\beta}}{\beta !} \right] a(y) \, dy$$
where $\xi$ is a point between $y$ and $x_0$. If $|x - x_0| \geq \sqrt{n}r$ it follows that $|x - \xi| \geq \frac{|x - x_0|}{2}$ since $| x_0 - \xi| \leq \frac{\sqrt{n}}{2} r$. Taking into account that for $|\beta| = 2m$, $\partial^{\alpha + \beta}h$ is a homogeneous function of degree $-n-\alpha$, we obtain
$$| (\partial^{\alpha}b)(x) | \leq c \, r^{|\beta|} \int_{Q(x_0, r)} |x- \xi|^{-n-\alpha} |a(y)| dy$$
$$\leq c \, r^{|\beta|} \| a \|_{p_0} |Q|^{1-1/p_0} |x - x_0|^{-n-\alpha} \leq c \, r^{2m + n} \| \chi_{Q}\|_{p(\cdot)}^{-1} |x - x_0|^{-n-\alpha}.$$
\end{proof}

\begin{lemma} $($Lemma 8 in \cite{segovia}$)$ If $h$ is the kernel defined in $($\ref{func h}$)$ and $|\alpha|=2m$, then $(\partial^{\alpha}h)(x)$ is a $C^{\infty}$ homogeneous function of degree $-n$ on $\mathbb{R}^{n} \setminus \{0\}$, and $\int_{|x|=1} \, (\partial^{\alpha}h)(x) \, dx=0.$
\end{lemma}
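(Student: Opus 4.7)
The plan is to establish the two claims separately: first that $\partial^{\alpha}h$ is $C^{\infty}$ on $\mathbb{R}^{n}\setminus\{0\}$ and positively homogeneous of degree $-n$, and then that its integral over the unit sphere vanishes. The first part I would split according to the case distinction in (\ref{func h}); the second I would deduce from the first via a divergence theorem argument on a spherical annulus.

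When $h(x)=|x|^{2m-n}$, the function is $C^{\infty}$ on $\mathbb{R}^{n}\setminus\{0\}$ and positively homogeneous of degree $2m-n$, so any derivative of order $|\alpha|=2m$ is $C^{\infty}$ there and homogeneous of degree $2m-n-2m=-n$. The delicate case is $h(x)=|x|^{2m-n}\ln|x|$, which occurs precisely when $n$ is even and $2m\geq n$. There the factor $|x|^{2m-n}=(x_{1}^{2}+\cdots+x_{n}^{2})^{(2m-n)/2}$ is a homogeneous polynomial $P$ of degree $2m-n$, and Leibniz's rule gives
\[
\partial^{\alpha}(P\ln|x|)=\sum_{\beta\leq\alpha}\binom{\alpha}{\beta}\,\partial^{\alpha-\beta}P \cdot \partial^{\beta}\ln|x|.
\]
Only the indices with $|\alpha-\beta|\leq 2m-n$, equivalently $|\beta|\geq n\geq 1$, contribute. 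For each such $\beta$, $\partial^{\beta}\ln|x|$ is $C^{\infty}$ on $\mathbb{R}^{n}\setminus\{0\}$ and homogeneous of degree $-|\beta|$, while $\partial^{\alpha-\beta}P$ is homogeneous of degree $|\beta|-n$, so their product has degree $-n$. Summing over the surviving indices yields the first part of the claim.

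For the vanishing of the spherical integral I would compute $\int_{S}(\partial^{\alpha}h)(x)\,dx$ on the shell $S=\{1<|x|<2\}$ in two ways. Polar coordinates combined with the $-n$-homogeneity established above give
\[
\int_{S}(\partial^{\alpha}h)(x)\,dx = \int_{1}^{2}\!\frac{dr}{r}\int_{|\omega|=1}(\partial^{\alpha}h)(\omega)\,d\sigma(\omega) = \ln 2 \cdot \int_{|\omega|=1}(\partial^{\alpha}h)(\omega)\,d\sigma.
\]
On the other hand, factoring $\partial^{\alpha}=\partial_{j}\partial^{\alpha'}$ with $|\alpha'|=2m-1\geq 1$ and applying the divergence theorem rewrites the same integral as a difference of boundary terms $\int_{|x|=r}(\partial^{\alpha'}h)(x)(x_{j}/|x|)\,dS$ at $r=2$ and $r=1$. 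Since $\partial^{\alpha'}h$ is homogeneous of degree $1-n$ and $x_{j}/|x|$ of degree $0$, rescaling $x=r\omega$ shows that each such boundary integral equals $\int_{|\omega|=1}(\partial^{\alpha'}h)(\omega)\omega_{j}\,d\sigma$, independent of $r$, so the two cancel and the left-hand side vanishes. Comparing the two expressions produces $\int_{|\omega|=1}(\partial^{\alpha}h)(\omega)\,d\sigma=0$. The main obstacle is the bookkeeping of the Leibniz expansion in the logarithmic case, where one must verify that all terms where the logarithm is fully differentiated away conspire to yield a single homogeneous degree $-n$; the polynomial character of $|x|^{2m-n}$ when $n$ is even and $2m\geq n$ is the structural fact that makes this work.
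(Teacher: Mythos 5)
Your argument is correct, and it is the standard one for this kind of vanishing-moment statement. The paper itself provides no proof here — it simply cites Lemma 8 of Gatto--Jim\'enez--Segovia (\cite{segovia}) — so there is no in-text argument to compare against, but what you have written is essentially the expected content of that cited lemma.

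Both halves check out. In the logarithmic case the key structural observation is exactly the one you isolate: since $n$ is even and $2m-n\geq 0$, $|x|^{2m-n}$ is a genuine polynomial $P$ of degree $2m-n$, so in the Leibniz expansion of $\partial^{\alpha}(P\ln|x|)$ the only nonzero terms have $|\beta|\geq n\geq 1$; consequently $\ln|x|$ itself never appears undifferentiated, each surviving summand is a product of homogeneous functions of degrees $|\beta|-n$ and $-|\beta|$, and the total is homogeneous of degree $-n$ and $C^{\infty}$ away from the origin. For the spherical integral, the annulus computation is clean: polar coordinates with degree-$(-n)$ homogeneity produce the factor $\ln 2$, while writing $\partial^{\alpha}=\partial_{j}\partial^{\alpha'}$ and applying the divergence theorem to the vector field $(\partial^{\alpha'}h)e_{j}$ yields boundary terms that are $r$-independent (because $\partial^{\alpha'}h$ is homogeneous of degree $1-n$ and the surface measure scales as $r^{n-1}$) and hence cancel. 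One small thing worth making explicit in a final write-up: the homogeneity of $\partial^{\alpha'}h$ of degree $1-n$, which the divergence step quietly uses, requires the same Leibniz bookkeeping one order lower — in the log case the surviving indices satisfy $|\beta|\geq n-1\geq 1$ (using $n$ even, so $n\geq 2$), so the undifferentiated logarithm is again excluded. With that noted, the proof is complete and self-contained.
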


We conclude this preliminaries with the following

\begin{proposition} Let $a(\cdot)$ be an $(p(\cdot),p_{0},d)-$atom with $d = \max \{d_{p(\cdot)}, 2m-1\}$, $p_0 >1$, and assume that $Q=Q(x_0, r)$ is the cube containing the support of $a(\cdot)$ in the definition of $(p(\cdot),p_{0},d)-$atom. If $b(x) = \int_{\mathbb{R}^{n}} h(x-y) a(y) dy$, then for all $x \in \mathbb{R}^{n}$ and all $0< \mu < 2m$
\begin{equation}
N_{q, 2m}(B; x) \lesssim \| \chi_{Q} \|_{p(\cdot)}^{-1} \left[M(\chi_{Q})(x) \right]^{\frac{2m + n/q - \mu}{n}} + \chi_{4\sqrt{n}Q}(x) M(a)(x) \label{N estimate}
\end{equation}
$$+ \chi_{4\sqrt{n}Q}(x) [M(M^{q}(a))(x)]^{1/q} + \chi_{4\sqrt{n}Q}(x) \sum_{|\alpha|=2m} T^{*}_{\alpha}(a)(x),$$
where $B$ is the class of $b$ in $E^{q}_{2m-1}$, $T^{*}_{\alpha}(a) (x) = \sup_{\epsilon >0} \left|\int_{|y|> \epsilon} \, (\partial^{\alpha}h)(y) a(x-y) \, dy\right|$ and $M$ is the maximal operator.
\end{proposition}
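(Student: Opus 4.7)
The plan is to exhibit, for each $x$, a single fixed representative of $B$ whose Calder\'on maximal function $\eta_{q,2m}(\,\cdot\,;x)$ at $x$ is dominated by the right-hand side of (\ref{N estimate}); since $N_{q,2m}(B;x)$ is the infimum of $\eta_{q,2m}(f;x)$ over $f\in B$, this yields the claim. I take $f_x:=b-P_x$, where
\[
P_x(z) \;=\; \sum_{|\beta|\leq 2m-1}\frac{(z-x)^\beta}{\beta!}\int \partial^\beta h(x-y)\,a(y)\,dy \;\in\; \mathcal{P}_{2m-1}
\]
is the formal Taylor polynomial of degree $2m-1$ of $b=h*a$ at $x$; it is well defined because $\partial^\beta h \in L^1_{loc}$ for $|\beta|\leq 2m-1$, and, crucially, it does not depend on $\rho$. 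The proof then splits according to whether $x\in 4\sqrt{n}Q$ or not.

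Assume first $x\notin 4\sqrt{n}Q$. For $\rho \leq |x-x_0|/(2\sqrt{n})$, Taylor's formula applied to $\xi\mapsto h(\xi-y)$ centred at $\xi=x$ expresses $b(z)-P_x(z)$ as a sum, over $|\alpha|=2m$, of $(z-x)^\alpha$ times an integral of $\partial^\alpha b$ along the segment $[x,z]$; this segment stays at distance $\gtrsim |x-x_0|$ from $Q$, so Lemma 11 with $|\alpha|=2m$ yields $|b(z)-P_x(z)|\lesssim \rho^{2m}\,r^{2m+n}\,\|\chi_Q\|_{p(\cdot)}^{-1}\,|x-x_0|^{-n-2m}$. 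For $\rho > |x-x_0|/(2\sqrt{n})$, I dominate $|b-P_x|_{q,Q(x,\rho)} \leq |b|_{q,Q(x,\rho)}+|P_x|_{q,Q(x,\rho)}$: the integral of $|b|^q$ over $Q(x,\rho)$ is split at $|z-x_0|=\sqrt{n}r$, with the outer region handled by Lemma 11 and the inner region by the dyadic Riesz-potential bound $|b(z)|\lesssim r^{2m}M(a)(z)$ combined with the $L^q$-boundedness of $M$ and Remark 10, while the coefficients of $P_x$ are controlled by the moment cancellation of $a$ up to degree $2m-1$ exactly as in the proof of Lemma 11. Since $M(\chi_Q)(x)\sim(r/|x-x_0|)^n$ for $x\notin 4\sqrt{n}Q$ and $\mu>0$, both subcases are absorbed into the first term of (\ref{N estimate}).

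Now suppose $x\in 4\sqrt{n}Q$. For each $\rho>0$ I split $a=a_1+a_2$ with $a_1=a\,\chi_{Q(x,8\sqrt{n}\rho)}$, and correspondingly $b=b_1+b_2$ and $P_x=P_{x,1}+P_{x,2}$. The standard dyadic estimate gives $|b_1(z)|\lesssim \rho^{2m}M(a)(z)$ for $z\in Q(x,\rho)$, whence $|b_1|_{q,Q(x,\rho)}\lesssim \rho^{2m}[M(M^q(a))(x)]^{1/q}$; the same argument applied to the coefficients of $P_{x,1}$ yields $|P_{x,1}(z)|\lesssim \rho^{2m}M(a)(x)$. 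On the non-local side, $a_2$ vanishes on $Q(x,8\sqrt{n}\rho)$, so $b_2$ is smooth in a neighbourhood of $x$ and the pointwise Taylor remainder $|b_2(z)-P_{x,2}(z)|\lesssim |z-x|^{2m}\sup_{w\in[x,z]}\sum_{|\alpha|=2m}|\partial^\alpha b_2(w)|$ applies. I then split
\[
\partial^\alpha b_2(w) \;=\; \int \partial^\alpha h(x-y)\,a_2(y)\,dy \;+\; \int\bigl[\partial^\alpha h(w-y)-\partial^\alpha h(x-y)\bigr]a_2(y)\,dy.
\]

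The technical crux is this last identity: the first summand must be recognised as an admissible truncation of the principal value defining $T^*_\alpha a(x)$, which dictates the choice of the dilation constant $8\sqrt{n}\rho$ in the cutoff (so that the excision domain of $a_2$ agrees, up to equivalence of norms, with a ball truncation of $T^*_\alpha$); and the second summand, which encodes the passage from $w\neq x$ back to $x$ along the segment, must be absorbed into $M(a)(x)$ by means of Lemma 12, the $(-n-1)$-homogeneity of $\nabla \partial^\alpha h$, and a standard tail summation. Every other step is routine estimation of Riesz-type potentials and their coefficients, so once this reduction is in place, summing the four contributions produces the bound (\ref{N estimate}) in both cases.
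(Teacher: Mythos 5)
Your proposal is structurally very close to the paper's: you use the same representative $f_x = b - P_x$ (the paper writes the Taylor remainder as $R(x,z)=f_x(x+z)$), the same split on $|x-x_0|$ versus $r$, Lemma 12 for the far region, and the local part producing $M(a)$, $[M(M^{q}(a))]^{1/q}$ and $T^{*}_{\alpha}(a)$, with the frozen-point term $\int\partial^{\alpha}h(x-y)a_{2}(y)\,dy$ matched to a truncation of $T^{*}_{\alpha}$. The one substantive difference is how the local part is organised: the paper keeps $a$ intact, splits the domain of the $y$-integral into $|x-y|<2|z|$ and $|x-y|\geq 2|z|$, and writes the Taylor remainder of $h$ with the integral form of the remainder at order $2m-1$, so that only $\partial^{\alpha}h$ with $|\alpha|\geq 2m-1$ ever enter; you instead truncate the atom, $a=a_{1}+a_{2}$ with $a_{1}=a\chi_{Q(x,8\sqrt{n}\rho)}$, and estimate the potential $b_{1}=h*a_{1}$ directly.

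That last choice opens a genuine gap in the logarithmic case of $(\ref{func h})$, i.e. $h(x)=|x|^{2m-n}\ln|x|$ with $n$ even and $2m-n\geq 0$. Your claims $|b_{1}(z)|\lesssim\rho^{2m}M(a)(z)$ and $|P_{x,1}(z)|\lesssim\rho^{2m}M(a)(x)$ rest on a dyadic estimate of $h*a_{1}$, but in the log case the dyadic sum produces an extra factor $1+|\ln\rho|$, because $h$ is not homogeneous. For $b=h*a$ one removes the log by subtracting a polynomial multiple of $\ln r$ from the kernel and invoking the vanishing moments of $a$; the truncation $a_{1}$ does \emph{not} inherit those vanishing moments, so this correction is unavailable for $b_{1}$. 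The same problem recurs in the coefficients $\int\partial^{\beta}h(x-y)a_{1}(y)\,dy$ of $P_{x,1}$ whenever $|\beta|\leq 2m-n$, since those $\partial^{\beta}h$ still carry a logarithm. The surviving unbounded factor $|\ln\rho|$ then persists into $\eta_{q,2m}(f_{x};x)$, and the right-hand side of $(\ref{N estimate})$ cannot absorb it. The paper's domain-split avoids this entirely: for $|\alpha|\geq 2m-1>2m-n$ (and $n\geq 2$), $\partial^{\alpha}h$ is an honest homogeneous function of degree $2m-n-|\alpha|$ with no logarithm, so every estimate in the local regime is log-free. To repair your argument in the log case you would need either to replicate the paper's split of the integration domain together with the order-$(2m-1)$ Taylor expansion of $h$, or to estimate $b_{1}$ as $b-b_{2}$ using the full cancellation of $a$ rather than bounding $h*a_{1}$ directly.
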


\begin{proof} For an $(p(\cdot), p_0, d)$ - atom $a$ satisfying the hypothesis of Proposition, we set
$$R(x, z)= b(x+z) - \sum_{|\alpha|\leq 2m-1} (\partial^{\alpha}b)(x) z^{\alpha}/ \alpha!$$
$$=b(x + z) - \sum_{|\alpha|\leq 2m-1} \left[\int_{Q(x_0, r)} \, (\partial^{\alpha}h)(x - y) a(y) \ dy \right] \frac{z^{\alpha}}{\alpha!}.$$

We shall estimate $|R(x,z)|$ considering the cases $|x - x_0| \geq 2\sqrt{n}r$ and $|x - x_0| < 2\sqrt{n}r$, and then we will obtain the estimate
(\ref{N estimate}).

\qquad

\textbf{Case:} $|x - x_0| \geq 2\sqrt{n}r$.

\qquad

For $|x - x_0| \geq 2\sqrt{n}r$, $|z| < \frac{1}{2}|x - x_0|$ and $0< \theta < 1$, we have $|x + \theta z - x_0| \geq |x - x_0| - |z| \geq \frac{1}{2}|x-x_0|\geq \sqrt{n}r$. Then, by the mean value theorem and Lemma 12, we get
\begin{equation}
|R(x,z)| \leq \sum_{|\alpha|=2m} |(\partial^{\alpha}b)(x+ \theta z)| \, \frac{|z|^{|\alpha|}}{\alpha!} \leq c \, \|\chi_{Q}\|^{-1}_{p(\cdot)} \left(\frac{r}{|x - x_0|}\right)^{2m+n} |z|^{2m}, \label{R1}
\end{equation}
for $|x - x_0| \geq 2\sqrt{n}r$ and $|z| < \frac{1}{2}|x - x_0|$.

Now, let $|z| \geq \frac{1}{2}|x - x_0|$. We have
$$|R(x,z)| \leq |b(x+z)| + \sum_{|\alpha|\leq 2m-1} |(\partial^{\alpha}b)(x)| |z|^{\alpha}/ \alpha!.$$

Since $|x - x_0| > 2 \sqrt{n} r$, by Lemma 12, and observing that $|z|/|x - x_0| > 1/2$, we have
$$|(\partial^{\alpha}b)(x)| \frac{|z|^{|\alpha|}}{\alpha!} \leq c \, \| \chi_{Q} \|_{p(\cdot)}^{-1} \left( \frac{r}{|x - x_0|} \right)^{2m+n} |z|^{2m}.$$

As for the other term, $|b(x+z)|$, we consider the cases $|x+z-x_0| > \sqrt{n}r$ and $|x+z-x_0|\leq \sqrt{n}r$. In the case $|x+z - x_0| > \sqrt{n}r$, we apply Lemma 12 for $\alpha = 0$, obtaining
$$|b(x+z)| \leq c \, \| \chi_{Q} \|_{p(\cdot)}^{-1} r^{2m+n} |x+z - x_0|^{-n}.$$
Then
\begin{equation}
|R(x,z)| \lesssim \| \chi_{Q} \|_{p(\cdot)}^{-1} r^{2m+n} |x+z - x_0|^{-n} + \| \chi_{Q} \|_{p(\cdot)}^{-1} \left( \frac{r}{|x - x_0|} \right)^{2m+n} |z|^{2m}, \label{R2}
\end{equation}
holds if $|x - x_0| \geq 2\sqrt{n}r$, $|z| \geq |x-x_0|/2$ and $|x+z - x_0| > \sqrt{n}r$.

For $|x+z-x_0|\leq \sqrt{n}r$, we consider the cases $n$ even and $2m-n \geq 0$, and $n$ odd or $2m-n < 0$. In the first case, we have that $|x|^{2m-n}$ is a polynomial of degree smaller than $2m$, and since $a(\cdot)$ is an $(p(\cdot), p_0, d)$ - atom, we have
$$|b(x+z)| = \left| \int_{Q(x_0,r)} |x+z-y|^{2m-n} \left[ \ln \left( |x+z-y| \right) - \ln(|x-x_0|) \right] a(y) dy \right|$$
$$\leq \|a \|_{p_0} \left(\int_{Q(x_0,r)} |x+z-y|^{(2m-n)p'_0} (\ln \left(|x- x_0|/ |x+z-y| \right))^{p'_0} \right)^{1/ p'_{0}}.$$
Since $|x+z-y|/ |x-x_0| < 1$, $\ln(t) \leq \mu^{-1} t^{\mu}$ for any $0 < \mu$ and $t \geq 1$, and observing that $Q(x_0,r) \subset \{y : |x+z-y|< \frac{3}{2} \sqrt{n}r \}$, we take $0< \mu < 2m-n$ if $2m-n >0$ or $0<\mu < n/p'_0$ if $2m-n=0$, to get
$$|b(x+z)| \lesssim \|\chi_{Q} \|_{p(\cdot)}^{-1} r^{n/p_0} |x - x_0|^{\mu} \left( \int_{|x+z-y|< \frac{3}{2} \sqrt{n}r} |x+z-y|^{(2m-n-\mu)p'_0} \right)^{1/p'_0}$$
$$\lesssim  r^{2m} \, \|\chi_{Q} \|_{p(\cdot)}^{-1} \left(\frac{r}{|x-x_0|} \right)^{-\mu}.$$
For the case $n$ odd and $2m-n \geq 0$, the proof is simpler since no logarithm appears, and the estimates obtained hold with $\mu=0$. For the case $2m - n<0$, we take $n/2m < s < p_0$ thus $0< (2m-n)s' + n$, from H\"older's inequality and Remark 11 it follows the estimate with $\mu=0$.
\\
Since $|x - x_0| \geq 2\sqrt{n}r$ we can conclude that
\begin{equation}
|R(x,z)| \lesssim r^{2m} \, \|\chi_{Q} \|_{p(\cdot)}^{-1} \left(\frac{r}{|x-x_0|} \right)^{-\mu} + \| \chi_{Q} \|_{p(\cdot)}^{-1} \left( \frac{r}{|x - x_0|} \right)^{2m+n} |z|^{2m}, \label{R3}
\end{equation}
for all $\mu > 0$, $|z| \geq |x - x_0|/2$ and $|x+z - x_0| \leq \sqrt{n}r$.

Let us the estimate
$$\delta^{-2m} \left( |Q(0, \delta)|^{-1} \int_{Q(0, \delta)} |R(x,z)|^{q} dz \right)^{1/q}, \,\,\,\, \delta > 0.$$
For them, we split the domain of integration into three subsets:

\qquad

$D_1 = \{ z \in Q(0, \delta) : |z| < |x-x_0|/2 \}$,

\qquad

$D_2 = \{z \in Q(0, \delta) : |z| \geq |x-x_0|/2, \, |x+z - x_0| > \sqrt{n}r \},$
\\\\
and

\qquad

$D_3 = \{z \in Q(0, \delta) : |z| \geq |x-x_0|/2, \, |x+z - x_0| \leq \sqrt{n}r \}$.
\\\\
According to the estimates obtained for $|R(x,z)|$ above, we use on $D_1$ the estimate (\ref{R1}), on $D_2$ the estimate (\ref{R2}) and on $D_3$ the estimate (\ref{R3}) to get
$$\delta^{-2m} \left( |Q(0, \delta)|^{-1} \int_{Q(0, \delta)} |R(x,z)|^{q} dz \right)^{1/q} \lesssim \| \chi_{Q} \|_{p(\cdot)}^{-1} \left(\frac{r}{|x-x_0|} \right)^{2m+n/q-\mu}.$$
Thus, for $0 < \mu < 2m$ we have
\begin{equation}
N_{q, 2m}(B,x) \lesssim \| \chi_{Q} \|_{p(\cdot)}^{-1} \left(\frac{r}{|x-x_0|} \right)^{2m+n/q-\mu} \lesssim \| \chi_{Q} \|_{p(\cdot)}^{-1} \left[ M(\chi_{Q})(x) \right]^{\frac{2m+n/q-\mu}{n}}, \label{Nq3}
\end{equation}
if $|x-x_0| \geq 2 \sqrt{n}r$.

\qquad

\textbf{Case:} $|x-x_0| < 2 \sqrt{n}r$.

\qquad

We have $$R(x,z) = \int \left[ h(x+z-y) - \sum_{|\alpha| \leq 2m-1} (\partial^{\alpha}h)(x-y) z^{\alpha}/\alpha! \right] a(y) dy$$
$$=\int_{|x-y|< 2 |z|} \, + \int_{|x-y|\geq 2 |z|} \, = I_1(x,z) + I_2(x,z).$$
Without loss of generality  we can assume that $y$ does not belong to the segment $[x,x+z]$, so we can write
$$U = h(x+z-y) - \sum_{|\alpha| \leq 2m-1} (\partial^{\alpha}h)(x-y) z^{\alpha}/\alpha!$$
$$= (2m-1) \sum_{|\alpha|=2m-1} (z^{\alpha}/\alpha!) \int_{0}^{1} (\partial^{\alpha}h)(x+tz-y) (1-t)^{2m-2} dt$$
$$+ \sum_{|\alpha| = 2m-1} (\partial^{\alpha}h)(x-y) z^{\alpha}/\alpha!$$
Since for $|\alpha|=2m-1$ the derivatives $\partial^{\alpha}h$ are homogeneous functions of degree $-n+1$, we get
$$|U| \leq c \, \left(\int_{0}^{1} |x+tz-y|^{-n+1} (1-t)^{2m-2} dt + |x-y|^{-n+1} \right) |z|^{2m-1}.$$
Observing that $|x-y| < 2|z|$ implies $|x+tz-y|<3|z|$, we obtain
$$|I_1(x,z)| \leq \int_{|x-y| < 2|z|} |U||a(y)| dy$$
$$\lesssim \, |z|^{2m-1}\int_{0}^{1} (1-t)^{2m-2} \left( \int_{|x+tz-y|<3|z|} |x+tz-y|^{-n+1} |a(y)| dy \right) dt$$
$$+ \, |z|^{2m-1} \int_{|x-y| < 2|z|} |x-y|^{-n+1} |a(y)| dy$$
$$= |z|^{2m-1}\int_{0}^{1} (1-t)^{2m-2} \left( \sum_{k=0}^{\infty}\int_{3^{-k}|z| \leq |x+tz-y|<3^{-(k-1)}|z|} |x+tz-y|^{-n+1} |a(y)| dy \right) dt$$
$$+ \, |z|^{2m-1} \sum_{k=0}^{\infty} \int_{2^{-k}|z| \leq |x-y| < 2^{-(k-1)}|z|} |x-y|^{-n+1} |a(y)| dy$$
$$\lesssim |z|^{2m} \left(\int_{0}^{1} (1-t)^{2m-2} \, M(a)(x+tz) \, dt + \, M(a)(x)\right).$$
To estimate $I_2(x,z)$, we write
$$U= \left[h(x+z-y) - \sum_{|\alpha| \leq 2m} (\partial^{\alpha}h)(x-y) z^{\alpha}/\alpha!\right] + \sum_{|\alpha| = 2m} (\partial^{\alpha}h)(x-y) z^{\alpha}/\alpha!$$
$$=U_1+U_2.$$
We have for some $0<s<1$
$$|U_1| \leq \sum_{|\alpha|=2m+1} |(\partial^{\alpha}h)(x+sz-y)| \, |z|^{2m+1}/\alpha!.$$
Since $|x-y|\geq 2|z|$ implies $|x+sz-y| \geq |x-y|/2$, and recalling that for $|\alpha|=2m+1$ the derivatives $\partial^{\alpha}h$ are homogeneous functions of degree $-n-1$, we get $|U_1|\leq c |x-y|^{-(n+1)} |z|^{2m+1}$. Therefore,
$$|I_2(x,z)| \lesssim |z|^{2m+1}\int_{|x-y| \geq 2|z|} |x-y|^{-(n+1)} |a(y)| dy + \left| \int_{|x-y| \geq 2|z|} U_2 \, a(y) dy \right|$$
$$\lesssim |z|^{2m} \left( M(a)(x) + \sum_{|\alpha|=2m} T_{\alpha}^{\ast}(a)(x) \right),$$
where $T^{*}_{\alpha}(a) (x) = \sup_{\epsilon >0} \left|\int_{|y|> \epsilon} \, (\partial^{\alpha}h)(y) a(x-y) \, dy\right|$.

Now, let us estimate $\delta^{-2m} \left( |Q(0, \delta)|^{-1} \int_{Q(0, \delta)} |I_1(x,z)|^{q} dz \right)^{1/q}$. We apply Minkowski's inequality for integrals to get
$$\delta^{-2m} \left( |Q(0, \delta)|^{-1} \int_{Q(0, \delta)} |I_1(x,z)|^{q} dz \right)^{1/q}$$
$$\lesssim \delta^{-2m} \int_{0}^{1} (1-t)^{2m-2} \, \left(|Q(0, \delta)|^{-1}\int_{Q(0, \delta)} |z|^{2mq} M^{q}(a)(x+tz) \, dz \right)^{1/q} \, dt$$
$$ + M(a)(x) \delta^{-2m}\left(|Q(0, \delta)|^{-1} \int_{Q(0,\delta)} |z|^{2mq} dz\right)^{1/q}$$
$$\lesssim [M(M^{q}(a))(x)]^{1/q} + M(a)(x).$$
It is easy to check that $$\delta^{-2m} \left( |Q(0, \delta)|^{-1} \int_{Q(0, \delta)} |I_2(x,z)|^{q} dz \right)^{1/q} \lesssim M(a)(x)
 + \sum_{|\alpha|=2m} T^{*}_{\alpha}(a)(x).$$
So
$$\delta^{-2m} \left( |Q(0, \delta)|^{-1} \int_{Q(0, \delta)} |R(x,z)|^{q} dz \right)^{1/q} \lesssim [M(M^{q}(a))(x)]^{1/q} + M(a)(x)$$
$$ + \sum_{|\alpha|=2m} T^{*}_{\alpha}(a)(x).$$
This estimate is global, in particular we have
\begin{equation}
N_{q,2m}(B;x) \lesssim [M(M^{q}(a))(x)]^{1/q} + M(a)(x) + \sum_{|\alpha|=2m} T^{*}_{\alpha}(a)(x), \label{Nq4}
\end{equation}
for all $x \in 4\sqrt{n}Q$. Finally, the estimates (\ref{Nq3}) and (\ref{Nq4}) for $N_{q,2m}(B;x)$ allow us to obtain (\ref{N estimate}).
\end{proof}

\section{Proofs of the results}

\qquad

\textit{Proof of Theorem 1.} Let $F \in \mathcal{H}^{p(\cdot)}_{q, \, 2m}(\mathbb{R}^{n})$. Since $N_{q, 2m}(F; x)$ is finite p.p.$x \in \mathbb{R}^{n}$, by corollary 3 in \cite{segovia}, the unique representative $f$ of $F$ satisfying $\eta_{q, 2m}(f; x)=N_{q, 2m}(F; x)$ is a function in $L^{q}_{loc}(\mathbb{R}^{n}) \cap \mathcal{S}'(\mathbb{R}^{n})$. Thus, if $\phi \in \mathcal{S}(\mathbb{R}^{n})$ and $\int \phi(x) \, dx \neq 0$, from Lemma 6 in \cite{segovia} we get
$$M_{\phi}(\Delta^{m}F)(x) \leq c \, p_{2m+n}(\phi) N_{q, \, s}(F; x).$$
Thus $\Delta^{m}F \in H^{p(\cdot)}(\mathbb{R}^{n})$ and $\| \Delta^{m}F \|_{H^{p(\cdot)}} \leq c \, \| F \|_{\mathcal{H}^{p(\cdot)}_{q, \, 2m}}.$

Now we shall see that the operator $\Delta^{m}$ is onto. By Theorem 4.6 in \cite{nakai}, given $f \in H^{p(\cdot)}(\mathbb{R}^{n})$ there exists a sequence of nonnegative numbers $\{ k_j \}_{j=1}^{\infty}$ and a sequence of cubes $\{Q_j \}_{j=1}^{\infty}$ and $(p(\cdot), p_0, d)$ - atoms $a_j$ supported on $Q_j$, such that $f= \sum_{j=1}^{\infty} k_j a_j$ and
$$\mathcal{A}\left( \left\{ k_{j}\right\}
_{j=1}^{\infty },\left\{ Q_{j}\right\} _{j=1}^{\infty },p(\cdot)\right) \lesssim \|f \|_{H^{p(\cdot)}}.$$ For each $j \in \mathbb{N}$ we put
$b_j(x)= \int_{\mathbb{R}^{n}} h(x-y) a_j(y) dy,$ from Proposition 15 we have
$$N_{q, 2m}(B_j; x)\lesssim  \| \chi_{Q_j} \|_{p(\cdot)}^{-1} \left[M(\chi_{Q_j})(x) \right]^{\frac{2m + n/q - \mu}{n}} + \chi_{4\sqrt{n}Q_j}(x) M(a_j)(x)$$
$$+ \chi_{4\sqrt{n}Q_j}(x) [M(M^{q}(a_j))(x)]^{1/q} + \chi_{4\sqrt{n}Q_j}(x) \sum_{|\alpha|=2m} T^{*}_{\alpha}(a_j) (x).$$
So
$$\sum_{j=1}^{\infty} k_j N_{q, 2m}(B_j; x) \lesssim \sum_{j=1}^{\infty} k_j \frac{\left[M(\chi_{Q_j})(x) \right]^{\frac{2m + n/q - \mu}{n}}}{\| \chi_{Q_j} \|_{p(\cdot)}} + \sum_{j=1}^{\infty} k_j \chi_{4\sqrt{n}Q_j}(x) M(a_j)(x) $$ $$\sum_{j=1}^{\infty} k_j \chi_{4 \sqrt{n} Q_j}(x) [M(M^{q}(a_j))(x)]^{1/q} + \sum_{j=1}^{\infty} k_j \chi_{4 \sqrt{n} Q_j}(x)  \sum_{|\alpha|=2m} T^{*}_{\alpha}(a_j) (x)$$ $$= I + II + III + IV.$$
To study $I$, by hypothesis, we have that $(2m + n/q) \underline{p} > n $, thus we can choose  $0 < \mu < 2m$ such that $(2m + n/q - \mu) \underline{p} > n $. Then
$$\|I\|_{p(\cdot)} = \left\| \sum_{j=1}^{\infty} k_j \| \chi_{Q_j} \|_{p(\cdot)}^{-1} \left[M(\chi_{Q_j})(\cdot) \right]^{\frac{2m + n/q - \mu}{n}} \right\|_{p(\cdot)}$$
$$= \left\| \left\{ \sum_{j=1}^{\infty} k_j \| \chi_{Q_j} \|_{p(\cdot)}^{-1} \left[M(\chi_{Q_j})(\cdot) \right]^{\frac{2m + n/q - \mu}{n}} \right\}^{\frac{n}{2m + n/q - \mu}} \right\|_{\frac{2m + n/q - \mu}{n}p(\cdot)}^{\frac{2m + n/q - \mu}{n}}$$
$$\lesssim \left\| \left\{ \sum_{j=1}^{\infty} k_j \| \chi_{Q_j} \|_{p(\cdot)}^{-1} \chi_{Q_j}(\cdot)  \right\}^{\frac{n}{2m + n/q - \mu}} \right\|_{\frac{2m + n/q - \mu}{n}p(\cdot)}^{\frac{2m + n/q - \mu}{n}} = \left\| \sum_{j=1}^{\infty} k_j \| \chi_{Q_j} \|_{p(\cdot)}^{-1} \chi_{Q_j}(\cdot) \right\|_{p(\cdot)}$$
$$\lesssim \left\| \left\{ \sum_{j=1}^{\infty} \left( k_j \| \chi_{Q_j} \|_{p(\cdot)}^{-1} \chi_{Q_j}(\cdot) \right)^{\underline{p}}  \right\}^{1/ \underline{p}} \right\|_{p(\cdot)} = \mathcal{A}\left( \left\{ k_{j}\right\}
_{j=1}^{\infty },\left\{ Q_{j}\right\} _{j=1}^{\infty },p(\cdot)\right) \lesssim \|f \|_{H^{p(\cdot)}},$$
where the first inequality follows from Lemma 2.4 in \cite{nakai}, since $(2m + n/q - \mu) \underline{p} > n $. The embedding $l^{\underline{p}} =l^{\min \{ p_{-}, 1 \}} \hookrightarrow l^{1}$ gives the second inequality.

\qquad

To study $II$, we have that the maximal operator $M$ is bounded on $L^{p_0}$ for each $1 < p_0 < \infty$, thus
$$\|  M(a_j) \|_{L^{p_0}(4\sqrt{n}Q_j)} \lesssim \| a_j \|_{p_0} \lesssim \frac{ |Q_j |^{1/p_0}}{\| \chi_{Q_j} \|_{p(\cdot)}} \lesssim \frac{ |4\sqrt{n}Q_j |^{1/p_0}}{\| \chi_{4\sqrt{n}Q_j} \|_{p(\cdot)}},$$
the third inequality holds since the quantities $\| \chi_{4\sqrt{n}Q_j} \|_{p(\cdot)}$ and $\| \chi_{Q_j} \|_{p(\cdot)}$ are comparable (see Lemma 2.2 in \cite{nakai}). Now if
$\| M(a_j) \|_{L^{p_0}(4\sqrt{n}Q_j)} \neq 0$ we obtain
$$\| II\|_{p(\cdot)} = \left\| \sum_{j=1}^{\infty} k_j \chi_{4 \sqrt{n} Q_j}(\cdot) M(a_j) (\cdot)  \right\|_{p(\cdot)}$$
$$\lesssim  \left\| \sum_{j=1}^{\infty} k_j \frac{ \chi_{4 \sqrt{n} Q_j}(\cdot) M(a_j) (\cdot) |4\sqrt{n}Q_j |^{1/p_0}}{\| M(a_j) \|_{L^{p_0}(4\sqrt{n}Q_j)} \, \| \chi_{4\sqrt{n}Q_j} \|_{p(\cdot)}}  \right\|_{p(\cdot)}$$
$$\lesssim \left\| \left\{\sum_{j=1}^{\infty} \left(k_j \frac{ \chi_{4 \sqrt{n} Q_j}(\cdot) M(a_j) (\cdot) |4\sqrt{n}Q_j |^{1/p_0}}{\| M(a_j) \|_{L^{p_0}(4\sqrt{n}Q_j)} \, \| \chi_{4\sqrt{n}Q_j}\|_{p(\cdot)}} \right)^{\underline{p}} \right\}^{1/\underline{p}}   \right\|_{p(\cdot)}$$
now take $p_0 > 1$ sufficiently large such that $\delta = \frac{1}{p_0}$ satisfies the hypothesis of Lemma 4.11 in \cite{nakai} to get
$$\lesssim \mathcal{A}\left( \left\{ k_{j}\right\}
_{j=1}^{\infty },\left\{4 \sqrt{n} Q_{j}\right\} _{j=1}^{\infty },p(\cdot)\right) \lesssim  \mathcal{A}\left( \left\{ k_{j}\right\}
_{j=1}^{\infty },\left\{ Q_{j}\right\} _{j=1}^{\infty },p(\cdot)\right) \lesssim \|f \|_{H^{p(\cdot)}}.$$

\qquad

To study $III$, we consider $2q < p_0$, then by H\"older's inequality and Remark 11 we obtain
$$\| [M(M^{q}(a_j))]^{1/q} \|_{L^{q}(4 \sqrt{n}Q_j)} \lesssim \| M(M^{q}(a_j)) \|_{2}^{1/q} |4\sqrt{n}Q_j|^{1/2q}$$
$$\lesssim \| M^{q}(a_j) \|_{2}^{1/q} |4\sqrt{n}Q_j|^{1/2q} = \| M(a_j) \|_{2q} |4\sqrt{n}Q_j|^{1/2q}$$
$$\lesssim \| a_j \|_{2q} |4\sqrt{n}Q_j|^{1/2q} \lesssim \frac{|4\sqrt{n}Q_j|^{1/q}}{\| \chi_{Q_j} \|_{p(\cdot)}} \lesssim \frac{ |4\sqrt{n}Q_j |^{1/q}}{\| \chi_{4\sqrt{n}Q_j} \|_{p(\cdot)}}.$$
Now, if $\| [M(M^{q}(a_j))]^{1/q} \|_{L^{q}(4 \sqrt{n}Q_j)} \neq 0$ and taking $q > 1$ sufficiently large such that $\delta = \frac{1}{q}$ satisfies the hypothesis of Lemma 4.11 in \cite{nakai}, a similar computation to done in the estimate of $II$ allows us to get
$$\|III\|_{p(\cdot)} = \left\|  \sum_{j=1}^{\infty} k_j \chi_{4 \sqrt{n} Q_j}(\cdot) [M(M^{q}(a_j))(\cdot)]^{1/q} \right\|_{p(\cdot)}$$
$$\lesssim \left\| \left\{\sum_{j=1}^{\infty} \left(k_j \frac{ \chi_{4 \sqrt{n} Q_j}(\cdot) [M(M^{q}(a_j))(\cdot)]^{1/q} |4\sqrt{n}Q_j |^{1/q}}{\| [M(M^{q}(a_j))(\cdot)]^{1/q} \|_{L^{p_0}(4\sqrt{n}Q_j)} \, \| \chi_{4\sqrt{n}Q_j}\|_{p(\cdot)}} \right)^{\underline{p}} \right\}^{1/\underline{p}}   \right\|_{p(\cdot)} \lesssim \|f\|_{H^{p(\cdot)}}.$$

\qquad

To study $IV$, by Lemma 14 and Theorem 4 in \cite{stein} p. 42, we have that the operator $T_{\alpha}^{*}$ is bounded on $L^{p_0}$ for each $1 < p_0 < \infty$, so $$\| T_{\alpha}^{*} (a_j) \|_{L^{p_0}(4\sqrt{n}Q_j)} \lesssim \| a_j \|_{p_0} \lesssim \frac{ |Q_j |^{1/p_0}}{\| \chi_{Q_j} \|_{p(\cdot)}} \lesssim \frac{ |4\sqrt{n}Q_j |^{1/p_0}}{\| \chi_{4\sqrt{n}Q_j} \|_{p(\cdot)}}.$$
Now, if $\| T_{\alpha}^{*} (a_j) \|_{L^{p_0}(4\sqrt{n}Q_j)} \neq 0$ we take $p_0 > 1$ sufficiently large such that $\delta = \frac{1}{p_0}$ satisfies the hypothesis of Lemma 4.11 in \cite{nakai}, once again a similar computation to done in the estimate of $II$, we get
$$\| IV\|_{p(\cdot)} = \left\| \sum_{j=1}^{\infty} k_j  \sum_{|\alpha|=2m} \chi_{4 \sqrt{n} Q_j}(\cdot) T^{*}_{\alpha}(a_j) (\cdot)  \right\|_{p(\cdot)}$$
$$\lesssim \sum_{|\alpha|=2m} \left\| \left\{\sum_{j=1}^{\infty} \left(k_j \frac{ \chi_{4 \sqrt{n} Q_j}(\cdot) T^{*}_{\alpha}(a_j) (\cdot) |4\sqrt{n}Q_j |^{1/p_0}}{\| T_{\alpha}^{*} (a_j) \|_{L^{p_0}(4\sqrt{n}Q_j)} \, \| \chi_{4\sqrt{n}Q_j}\|_{p(\cdot)}} \right)^{\underline{p}} \right\}^{1/\underline{p}}   \right\|_{p(\cdot)} \lesssim \|f \|_{H^{p(\cdot)}}.$$

Thus we have
$$\left\| \sum_{j=1}^{\infty} k_j N_{q, 2m}(B_j; \cdot) \right\|_{p(\cdot)} \lesssim \|f \|_{H^{p(\cdot)}}.$$
By Lemma 7, we obtain $\rho_{p(\cdot)}\left( \sum_{j=1}^{\infty} k_j N_{q, 2m}(B_j; \cdot) \right) < \infty$. Hence
\begin{equation}
\sum_{j=1}^{\infty} k_j N_{q, 2m}(B_j; x) < \infty \,\,\,\,\,\, \text{p.p.}x \in \mathbb{R}^{n} \label{Nq}
\end{equation}
and
\begin{equation}
\rho_{p(\cdot)}\left( \sum_{j=M+1}^{\infty} k_j N_{q, 2m}(B_j; \cdot) \right) \rightarrow 0, \,\,\,\, \text{as} \,\,  M \rightarrow \infty  \label{Nq2}.
\end{equation}
From (\ref{Nq}) and Lemma 6, there exists a function $F$ such that $\sum_{j=1}^{\infty} k_j B_j = F$ in $E^{q}_{2m-1}$ and
$$N_{q, 2m} \left( \left(F - \sum_{j=1}^{M} k_j B_j \right) ; \, x \right) \leq c \, \sum_{j=M+1}^{\infty} k_j N_{q, 2m}(B_j; x).$$
This estimate together with (\ref{Nq2}) and Lemma 8 implies
$$\left\| F - \sum_{j=1}^{M} k_j B_j \right\|_{\mathcal{H}^{p(\cdot)}_{q,2m}} \rightarrow 0, \,\,\,\, \text{as} \,\,  M \rightarrow \infty.$$
By proposition 9, we have that $F \in \mathcal{H}^{p(\cdot)}_{q,2m}(\mathbb{R}^{n})$ and $F = \sum_{j=1}^{\infty} k_j B_j$ in $\mathcal{H}^{p(\cdot)}_{q,2m}(\mathbb{R}^{n})$. Since $\Delta^{m}$ is a continuous operator from $\mathcal{H}^{p(\cdot)}_{q,2m}(\mathbb{R}^{n})$ into $H^{p(\cdot)}(\mathbb{R}^{n})$, we get
$$\Delta^{m}F = \sum_j k_j \Delta^{m} B_j = \sum_j k_j a_j = f,$$
in $H^{p(\cdot)}(\mathbb{R}^{n})$. This shows that $\Delta^{m}$ is onto $H^{p(\cdot)}(\mathbb{R}^{n})$. Moreover,
$$\| F\|_{\mathcal{H}^{p(\cdot)}_{q,2m}} = \left\| \sum_{j=1}^{\infty} k_j B_j \right\|_{\mathcal{H}^{p(\cdot)}_{q,2m}} \lesssim \left\| \sum_{j=1}^{\infty} k_j N_{q, 2m}(B_j; \cdot) \right\|_{p(\cdot)} \lesssim \|f \|_{H^{p(\cdot)}} = \| \Delta^{m} F \|_{H^{p(\cdot)}}.$$

\qquad

For to conclude the proof, we will show that the operator $\Delta^{m}$ is injective on $\mathcal{H}^{p(\cdot)}_{q, \, 2m}$. Let $\mathcal{O} = \{ x : N_{q, 2m}(F; x) > 1\}$. The set $\mathcal{O}$ is open because of that $N_{q, 2m}(F; \cdot)$ is lower semicontinuous. We take a constant $r>0$  such that $r \geq \{q, p_{+}\}$. Since $N_{q, 2m}(F; \cdot) \in L^{p(\cdot)}(\mathbb{R}^{n})$ it follows that $|\mathcal{O}|$ is finite and $N_{q, 2m}(F; \cdot) \in L^{r}(\mathbb{R}^{n} \setminus \mathcal{O})$ thus $F \in \mathcal{N}^{r, q}_{2m}$, (for the definition of the space $\mathcal{N}^{r, q}_{2m}$ see p. 564 in \cite{calderon}). Finally, the injectivity of the operator $\Delta^{m}$ it follows from Lemma 9 in \cite{calderon}. $\square$

\qquad

\textit{Proof of Theorem 2.} Let $F \in \mathcal{H}^{p(\cdot)}_{q, \, 2m}(\mathbb{R}^{n})$ and assume $F \neq 0$. Then there exists $g \in F$ that is not a polynomial of degree less or equal to $2m-1$. It is easy to check that there exist a positive constant $c$ and a cube $Q = Q(0,r)$ with $r>1$ such that
$$\int_{Q} |g(y) - P(y)| dy \geq c > 0,$$
for every $P \in \mathcal{P}_{2m-1}$.

Let $x$ be a point such that $|x| > \sqrt{n}r$ and let $\delta= 4 |x|$. Then $Q(0,r) \subset Q(x, \delta)$. If $f \in F$, then $f = g - P$ for some $P \in \mathcal{P}_{2m-1}$ and
$$\delta^{-2m}|f|_{q, Q(x, \delta)} \geq c |x|^{-2m-n/q}.$$
So $N_{q,2m}(F;x) \geq c \, |x|^{-2m-n/q}$, for $|x| > \sqrt{n}r$. Since $p_{+} \leq n(2m+n/q)^{-1}$, we have
$$\rho_{p(\cdot)}(N_{q,2m}(F; \cdot)) \geq c \, \int_{|x| > \sqrt{n}r} |x|^{-(2m+n/q)p_{+}} \, dx = \infty.$$
In view of Lemma 7, we get a contradiction. Thus $\mathcal{H}^{p(\cdot)}_{q, 2m}(\mathbb{R}^{n}) = \{0\}$, if $p_{+} \leq n(2m+n/q)^{-1}$. $\square$

\qquad

\begin{remark} Theorem 1 does not hold in general for $\mathcal{H}^{p(\cdot)}_{q,\gamma}(\mathbb{R})$ when $\gamma$ is not a natural number. In \cite{sheldy}, S. Ombrosi gives an example which Theorem 1 is not true for $\mathcal{H}^{p}_{q,\gamma}(\mathbb{R})$ and the operator $\left(\frac{d}{dx}\right)^{\gamma}$, when
$0 < \gamma < 1 $ and $(\gamma + 1/q)p>1$.
\end{remark}

\end{document}